\newcommand{\vvirg}{ , \dots , }
\newcommand{\bfJ}{\mathbf{J}}
\newcommand{\calO}{\mathcal{O}}
\newcommand{\bbC}{\mathbb{C}}
\newcommand{\bbN}{\mathbb{N}}
\newcommand{\bbP}{\mathbb{P}}
\newcommand{\bbT}{\mathbb{T}}
\newcommand{\frakS}{\mathfrak{S}}
\newcommand{\rmR}{\mathrm{R}}
\renewcommand{\phi}{\varphi}
\renewcommand{\bar}[1]{\overline{#1}}
\newcommand{\rank}{\mathrm{rank}}
\DeclareMathOperator{\Sym}{Sym}
\newcommand{\Sub}{\mathrm{Sub}}
\newcommand{\GL}{\mathrm{GL}}
\newcommand{\CB}{\mathit{CB}}
\newcommand{\ev}{\mathrm{ev}}
\newcommand{\con}{\mathrm{con}}
\newcommand{\linspan}[1]{\langle #1 \rangle}
\newtheorem{theorem}{Theorem}[section]
\newtheorem{proposition}[theorem]{Proposition}
\newtheorem{lemma}[theorem]{Lemma}
\newtheorem{corollary}[theorem]{Corollary}
\theoremstyle{definition}
\newtheorem{definition}[theorem]{Definition}
\newtheorem{remark}[theorem]{Remark}
\newcommand{\sm}{\mathrm{sm}}
\subjclass[2020]{14N07, 14N05, 15A69}
\keywords{Waring rank, tensor decomposition, Hilbert function, Terracini locus}
\author{Luca Chiantini and Fulvio Gesmundo}
\address[L. Chiantini]{Dipartimento di Ingegneria dell'Informazione e Scienze Matematiche, Universit\`a di Siena, Italy}
\email{luca.chiantini@unisi.it}
\address[F. Gesmundo]{Saarland University, Saarbr\"ucken 66123, Germany ; 
(current) Institut de Mathématiques de Toulouse; UMR5219 -- Université de Toulouse; CNRS -- UPS, F-31062 Toulouse Cedex 9, France}
\email{fgesmund@math.univ-toulouse.fr}
\title[Cubic forms of low rank]{Decompositions and Terracini loci of \\ cubic forms of low rank}
\begin{document}

\begin{abstract}
 We study Waring rank decompositions for cubic forms of rank $n+2$ in $n+1$ variables. In this setting, we prove that if a concise form has more than one non-redundant decomposition of length $n+2$, then all such decompositions share at least $n-3$ elements, and the remaining elements lie in a special configuration. Following this result, we give a detailed description of the $(n+2)$-th Terracini locus of the third Veronese embedding of $n$-dimensional projective space.
 \end{abstract}

\maketitle

\section{Introduction}

For an integer $n$, let $V$ be a complex vector space of dimension $n+1$ and let $\bbP^n = \bbP V$ be the projective space of lines in $V$. Identify $V$ with the space of linear forms on $V^*$ and let $S^d V$ be the space of homogeneous polynomials of degree $d$ on $V^*$. The Waring rank of a form $F \in S^d V$ is 
\[
\rmR(F) = \min \{ r : F = L_1^d + \cdots + L_r^d \text{ for some $L_1 \vvirg L_r \in V$}\}. 
\]
The $d$-th Veronese embedding of $\bbP V$ is the map $v_d : \bbP V \to \bbP S^d V$ defined by $v_d([L]) = [L^d]$. If $A$ is a set of $r$ points in $\bbP V$, we say that $A$ has length $r$, and we write $\ell(A) = r$. Geometrically, the Waring rank of $F$ can be equivalently defined as 
\[
\rmR(F) = \min \{ r : [F] \in \langle v_d(A) \rangle \text{ for a set of points $A \subseteq \bbP V$ with $\ell(A) = r$}\}.
\]
Here $\langle - \rangle$ denotes the (projective) linear span. A finite set $A$ with $\ell(A) = r$ such that $[F] \in \linspan{\nu_d(A)}$ is a \emph{decomposition} of $F$ of length $r$.  If $A = \{ [L_1] \vvirg [L_r]\}$, this is equivalent to the existence of an expression 
\begin{equation}\label{eqn: decomposition F}
 F = \alpha_1 L_1^d + \cdots + \alpha_r L_r^d
\end{equation}
for some $\alpha_1 \vvirg \alpha_r \in \bbC$. We say that the decomposition is \emph{non-redundant} (or irredundant) if there is no proper subset $A' \subsetneq A$ such that $[F] \in \linspan{v_d(A')}$; this is equivalent to saying that $L_1^d \vvirg L_r^d$ are linearly independent and no coefficient $\alpha_i$ in \eqref{eqn: decomposition F} is $0$. We say that the decomposition is \emph{minimal} if its length is the minimal possible, namely $r = \rmR(F)$. If $F$ is a form of rank $r$ which has a unique minimal decomposition of length $r$, we say that $F$ is identifiable.

We say that $F \in S^d V$ is concise if there is no proper subspace $V' \subsetneq V$ such that $F \in S^d V'$; equivalently, there is no change of coordinates in $V$ after which $F$ can be written in fewer than $n+1$ variables. It is a classical fact, implicitly appearing in \cite{Sylv:PrinciplesCalculusForms}, that $F$ is concise if and only if its first order partial derivatives are linearly independent; equivalently $F$ is concise if and only if its partial derivatives of order $d-1$ span $V$; we refer to \cite{IarrKan:PowerSumsBook,Car:ReducingNumberVariables} for a modern discussion on this topic. If $F$ is concise, then every decomposition $A$ of $F$ satisfies $\langle A \rangle = \bbP V$; in other words, no decomposition of $F$ is contained in a hyperplane. In particular $\rmR(F) \geq n+1$.  A partial converse of this property in low rank is given in \autoref{lemma: minimal plus one concise}.

For a subset $A \subseteq \bbP V$ with $\ell(A) = r$, we say that $A$ is linearly independent if $\dim \langle A \rangle = \ell(A) - 1$, which is the maximum possible value; here $\dim$ is the dimension in projective space. The Kruskal rank of $A$ is the largest integer $k$ such that every subset of $k$ elements of $A$ is linearly independent. We say that $A$ is in \emph{linear general position} (LGP) if any subset of $n+1$ elements of $A$ is linearly independent, namely the Kruskal rank of $A$ is $\min \{ \ell(A) , n+1\}$, which is the maximum possible value.

In this work, we study non-redundant decompositions of length $n+2$ for concise cubic forms in $n+1$ variables. This is the first case where Kruskal's criterion does not guarantee uniqueness of the decomposition and in fact there are examples of forms having multiple decompositions, see e.g. \cite{Derk:KruskalSharp}. However, our main result shows that if the form is concise, then non-uniqueness of a decomposition can only occur under very special conditions, and in particular very low Kruskal rank.

\begin{theorem}\label{thm: main theorem}
Let $V$ be a vector space of dimension $n+1$, with $n \geq 1$. Let $F \in S^3 V$ be concise with $\rmR(F) \leq n+2$. Then one of the following occurs:
 \begin{enumerate}[(I)]
  \item there is a unique non-redundant decomposition $A$ of length $n+2$ for $F$; in this case the Kruskal rank of $A$ is at least $4$;
  \item there are infinitely many non-redundant decompositions of $F$ of length $n+2$; any two decompositions $A,B$ of length $n+2$ intersect in at least $n-3$ points; moreover $(A \setminus B) \cup (B \setminus A)$ lies in the union of two lines; in particular the Kruskal rank of any decomposition is $2$;
  \item there are infinitely many non-redundant decompositions of $F$ of length $n+2$; any two decompositions $A,B$ of length $n+2$ intersect in at least $n-2$ points; moreover $(A \setminus B) \cup (B \setminus A)$ is contained in the union of two planes; in particular the Kruskal rank of any decomposition is at most $3$.
 \end{enumerate}
\end{theorem}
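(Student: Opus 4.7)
My plan is to assume that $F$ admits two distinct non-redundant decompositions $A, B$ of length $n+2$, and extract from this situation the structural constraints of cases (II) and (III); case (I) will follow as a contrapositive argument for the Kruskal rank bound. I would start by writing $F = \sum_{L \in A} \alpha_L L^3 = \sum_{M \in B} \beta_M M^3$ and subtracting, which yields a non-trivial linear relation $\sum_{N \in A \cup B} c_N N^3 = 0$ with $c_N \neq 0$ for every $N \in A \triangle B$ (by non-redundancy). In the dual language of apolarity, this says $I_{A \cup B} \subseteq F^\perp$ and that $A \cup B$ fails to impose independent conditions on cubics. Conciseness controls $\dim (F^\perp)_2 = \binom{n+1}{2}$, and both $(I_A)_2$ and $(I_B)_2$ (assuming independent conditions in degree $2$) sit as hyperplanes inside $(F^\perp)_2$, so $(I_{A \cup B})_2$ has codimension at most $2$ in $(F^\perp)_2$.

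The heart of the argument is a Cayley--Bacharach-style structure result: a finite set $X \subseteq \bbP^n$ of size at most $2n+4$ that fails to impose independent conditions on cubics must have its ``failing locus'' concentrated in a low-dimensional linear subspace. Applied to $X = A \cup B$, this should force a dichotomy: either $A \triangle B$ lies in a union of two lines (giving case (II)), or $A \triangle B$ lies in a union of two planes meeting along a line, hence contained in a $\bbP^3$ (giving case (III)). The intersection bounds then follow from a span count: conciseness implies that $A$ spans $\bbP^n$, so the points of $A \cap B$ outside the degenerate locus must contribute the missing dimensions, yielding $|A \cap B| \geq n-3$ in case (II) and $|A \cap B| \geq n-2$ in case (III) after careful accounting of how $A$ distributes over the two planes.

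Once the structural dichotomy is in place, I would produce the infinite family of decompositions by invoking the classical Sylvester pencil: binary cubics of rank $3$ admit a $1$-parameter family of length-$3$ decompositions, and an analogous positive-dimensional family exists for ternary cubics of the relevant rank. Varying the decomposition along the degenerate line (resp.\ plane) while keeping the part supported on $A \cap B$ fixed produces a continuous family of length-$(n+2)$ decompositions of $F$. The Kruskal rank claims are then immediate: three collinear points in $A$ (case (II)) force Kruskal rank exactly $2$, and four coplanar points in $A$ (case (III)) force Kruskal rank at most $3$. For case (I), if the unique decomposition $A$ had Kruskal rank at most $3$, the same Sylvester construction would produce a second decomposition of length $n+2$, contradicting uniqueness; hence the Kruskal rank of $A$ is at least $4$.

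The main obstacle I anticipate is the Cayley--Bacharach / Hilbert function structural step: classifying precisely when a set of at most $2n+4$ points in $\bbP^n$ can fail to impose independent conditions on cubics, and refining this classification to isolate the specific two-line and two-plane configurations that actually arise here. This likely requires induction on $n$, careful treatment of the low-dimensional base cases, and extracting additional rigidity from the hypothesis that $A \cup B$ is the union of two decompositions of a single concise cubic form rather than an arbitrary failing configuration.
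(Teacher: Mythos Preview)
Your overall toolkit (Hilbert function, Cayley--Bacharach, apolarity) matches the paper's, and your arguments for case~(I) and for producing the infinite families are essentially the same as the paper's. The substantive difference is in the structural step.

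The paper does \emph{not} attempt a direct classification of configurations of at most $2n+4$ points failing to impose independent conditions on cubics. Instead it runs an induction on $n$ whose engine is: for $n\geq 4$ one first shows, via the Hilbert function and Cayley--Bacharach inequality, that any two non-redundant decompositions $A,B$ of length $n+2$ must satisfy $A\cap B\neq\emptyset$; then, picking a common point $[L]\in A\cap B$ and setting $F'=F-\beta L^3$, one reduces either to a Fermat-type situation in $V$ (handled by a separate lemma) or to a concise cubic in a hyperplane $V'$ with two decompositions of length $n+1$, where the inductive hypothesis applies. The base cases $n\leq 3$ are treated directly; the only place a deeper structure theorem is needed is the disjoint case for $n=3$, where the Bigatti--Geramita--Migliore maximal-growth theorem forces $A\cup B$ onto a degree-$2$ curve.

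Your plan has two soft spots relative to this. First, the Cayley--Bacharach property you want to invoke on $A\cup B$ is only available when $A$ and $B$ are \emph{disjoint}; when they overlap you must pass to a subtracted form, which is precisely the paper's inductive mechanism, not a side case. Second, the direct classification you describe as ``the main obstacle'' is genuinely harder than what is needed: the paper never classifies all failing configurations, it only shows non-disjointness and then peels off one point at a time. You anticipate induction in your last paragraph but do not name this reduction; that is the missing idea.
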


\autoref{thm: main theorem} is built on an induction argument for $n \geq 4$. The base of the induction is given by the cases $n=1,2,3$, where special configurations of points yield some pathological behaviour. These special configurations already appear in the literature \cite{ComaSigu:RankBinaryForms,BerGimIda:ComputingSymmetricRankSymmetricTensors,BalBer:StratificationFourthSecantVeronese,Ball:StratificationSigma5} and in some cases they already appear implicitly in the classical \cite{Sylv:PrinciplesCalculusForms,Terr:seganti}. The general cases are obtained as extensions of a special configuration on a subspace; this result is original to the best of our knowledge.

As a consequence of \autoref{thm: main theorem}, in \autoref{sec:terraciniloci}, we obtain a detailed description of the $(n+2)$-th Terracini locus of the Veronese variety $v_3(\bbP^n)$ in the concise setting. 

The study of identifiability of homogeneous polynomials, and more generally of tensors, is a classical topic in algebraic geometry and invariant theory. Generic forms in $S^d V$ are identifiable only for few combination of degree $d$ and number of variables $n+1$, namely $(d,n) = (2k-1,1), (3,3), (5,2)$: identifiability in these cases was known classically \cite{Sylv:PrinciplesCalculusForms,Hilblet}; the full proof that these are the only cases where generic identifiability holds is recent \cite{GalMel:IdentifiabilityHomPoly}. On the other hand, generic forms of subgeneric rank are identifiable with only few exceptions \cite{BocChiOtt:Refined_identifiability_tensors,ChiOttVan:GenericIdentifSubgenRank}. In general, however, little is known about the explicit genericity condition providing identifiability. The most general tool in this context is Kruskal's criterion \cite{Krusk:ThreeWayArrays}, which guarantees uniqueness of decomposition under the hypothesis of large enough \emph{Kruskal rank}; a variant of Kruskal criterion is given in \cite{LovPet:GeneralizationKruskal}. A geometric study of the subvarieties of non-identifiable forms, in special cases, is given in  \cite{AngChiVan:IdentifiabilityBeyondKruskal,AngChi:IdentifiabilityTernaryForms,ChiOtt:FootnoteFootnote}, which are based on properties of the Hilbert function of points \cite{Chi:HilbertFunctionsTensorAn} and on apolarity theory \cite{IarrKan:PowerSumsBook}.

\subsection*{Acknowledgements} This work is partially supported by  the Thematic Research Programme ``Tensors: geometry, complexity and quantum entanglement'', University of Warsaw, Excellence Initiative -- Research University and the Simons Foundation Award No. 663281 granted to the Institute of Mathematics of the Polish Academy of Sciences for the years 2021-2023. We thank IMPAN and the AGATES organizers for providing an excellent research environment throughout the semester.

\section {Preliminaries}
 
Let $X \subseteq \bbP^N$ be an irreducible projective variety. The $r$-th secant variety of $X$ is 
\[
 \sigma_r(X) = \bar{\{ p \in \bbP^N : p \in \langle x_1 \vvirg x_r \rangle \text{ for some } x_1 \vvirg x_r \in X  \}};
\]
here the closure can be taken equivalently in the Zariski or the Euclidean topology. The $r$-th symmetric product of $X$ is $X^{(r)} = X^{\times r} / \frakS_r$, where the symmetric group $\frakS_r$ acts by permuting the factors of $X^{\times r}$. One can verify that $X^{(r)}$ is a projective variety. The $r$-th abstract secant variety of $X$ is 
\[
 A\sigma_r(X) = \bar{\{ ( (x_1 \vvirg x_r), p) : p \in \langle x_1 \vvirg x_r \rangle \}} \subseteq X^{(r)} \times \bbP^N.
\]
There are two natural projections $\pi_X : A\sigma_r(X) \to X^{(r)}$, which is surjective, and $\pi_\sigma : A\sigma_r(X) \to \bbP^N$, which surjects onto the secant variety $\sigma_r(X)$. It is easy to verify that $ A\sigma_r(X)$ is irreducible and $\dim A\sigma_r(X) = r \dim X + (r-1)$; in fact, $A\sigma_r(X)$ is birational to a projective bundle over $X^{(r)}$ whose fibers are copies of $\bbP^{r-1}$. We say that $\sigma_r(X)$ is non-defective, or that $X$ is not $r$-defective, if $\dim \sigma_r(X) = \min \{ N , r \dim X + (r-1)\}$: if $\sigma_r(X) \subsetneq \bbP^N$, this is equivalent to the condition that $\pi_\sigma$ has generically finite fibers. 

\begin{definition} Let $X  \subseteq \bbP^N$ be an irreducible variety and let $r \geq 1$ be an integer such that $N \geq r \dim  X + (r-1)$. The $r$-th Terracini locus of $X$ is 
\[
\bbT_r(X) = \bar{\left\{ (x_1 \vvirg x_r) : \begin{array}{l} x_i \in X^{\sm} \text{ are linearly independent}, \\ 
                                        T_{x_1} X , \cdots , T_{x_r} X \text{ are not linearly independent}
                                       \end{array}
\right\}} \subseteq X^{(r)},
\]
where $X^{\sm}$ is the open set of the smooth points of $X$ and $T_xX$ denotes the affine tangent space to $X$ at $x$.
\end{definition}
This object was introduced in \cite{BalChi:TerraciniLocus,BalBerSan:TerraciniLocusThreePts} as a tool to study identifiability of tensors and singularities of secant varieties. Terracini's Lemma \cite[Lemma 1]{BCCGO:HitchhikerGuide} guarantees that if $\sigma_r(X)$ is non-defective and it does not fill the space, then $\bbT_r(X)$ is a proper subvariety of $X^{(r)}$.

The preimage $\pi_X^{-1} ( \bbT_r(X)) \subseteq A\sigma_r(X)$ is contained in the locus where the differential of $\pi_\sigma$ drops rank. If a point $q \in \bbP^N$ is in the image $\pi_\sigma \pi_X^{-1}(\bbT_r(X)) \subseteq \sigma_r(X)$ then either $\pi_\sigma^{-1}(q)$ is positive dimensional, or $q$ is a cuspidal singularity of $\sigma_r(X)$. 

By definition, the Terracini locus is the sets of points $\{x_1 \vvirg x_r\}$ with the property that they impose independent conditions on $\calO_X(1)$ but the double points $\{ 2x_1 \vvirg 2x_r\}$ do not impose independent conditions on $\calO_X(1)$. In fact, with this formulation, one can give the definition of Terracini locus for an abstract variety and any (ample) line bundle, see \cite{BalChi:TerraciniLocus,BalVen:NoteTerracini}.

An important tool in the study of sets of point in projective space is given by the Hilbert function. Let $\bbC[V] = \Sym(V^*)$ be the homogeneous coordinate ring of $\bbP V$, which is a polynomial ring in $n+1$ variables. If $Z \subseteq \bbP V$ is a subvariety with homogeneous ideal $I(Z)$, the Hilbert function of $Z$ is 
\begin{align*}
 h_Z : \bbN &\to \bbN \\ 
 t &\mapsto \dim ( \bbC[V]_t  / I(Z)_t).  
\end{align*}
If $Z$ is a set of points, $Z = \{ [v_1] \vvirg [v_r]\}$ for $v_j \in V$, let $\ev_i : \bbC[V] \to \bbC$ be the evaluation map at $v_i$. For every $t$, the restriction $\ev_i : \bbC[V]_t \to \bbC$ is linear, and the Hilbert function of $Z$ is characterized as 
\[
 h_Z(t) = \rank( (\ev_1 \vvirg \ev_r) : \bbC[V] \to \bbC^r).
\]
The \emph{first difference} of the Hilbert function of $Z$ is $Dh_Z(t) = h_Z(t) - h_Z(t-1)$. It is often identified simply with the sequence of its non-zero values, which is called the h-vector of $Z$. We record three main properties of the Hilbert function and the h-vector, which will be useful in the following. We refer to \cite{Chi:HilbertFunctionsTensorAn} for details and proofs.

\begin{proposition}\label{prop: basics HF}
 Let $Z' \subseteq Z \subseteq \bbP V$ be finite sets of points. Then 
 \begin{enumerate}[(i)]
 \item $Dh_{Z'}(t) \leq Dh_{Z}(t)$ for every $t$; in particular $h_{Z'}(t) \leq h_{Z}(t)$ for every $t$.
  \item There is an integer $\tau \geq 0$ such that $h_Z(t)$ is strictly increasing for $t \leq \tau$ and $h_Z(t) = \ell(Z)$ for $t \geq \tau$. In particular $Dh_Z (t) > 0$ if $t\leq \tau$ and $Dh_Z(t) = 0$ if $t > \tau$. 
 \item If $Dh_Z(t) \leq t$, then $Dh_Z(t+1) \leq Dh_Z(t)$.
 \end{enumerate}
\end{proposition}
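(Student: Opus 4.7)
The plan is to deduce the three items from a uniform toolkit: multiplication by a generic linear form $\lambda \in V^*$ which vanishes at no point of $Z$, hence is a nonzerodivisor on $R := \bbC[V]/I(Z)$. This yields the exact sequence
\[
 0 \to R_{t-1} \xrightarrow{\cdot \lambda} R_t \to (R/\lambda R)_t \to 0,
\]
so that $h_Z$ is non-decreasing and $Dh_Z(t) = \dim (R/\lambda R)_t$. The quotient $R/\lambda R$ is a graded Artinian $\bbC$-algebra generated in degree $1$, and its Hilbert function is precisely $Dh_Z$.

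For (i), I would reduce to the case $Z = Z' \cup \{p\}$ and iterate over the points of $Z \setminus Z'$. In this case, evaluation of elements of $I(Z')_t$ at $p$ defines a linear map to $\bbC$ with kernel $I(Z)_t$, so $h_Z(t) - h_{Z'}(t) \in \{0,1\}$, with value $1$ exactly when some $g \in I(Z')_t$ satisfies $g(p) \neq 0$. If such a $g$ exists at degree $t$, then for any linear form $\mu$ with $\mu(p) \neq 0$ the product $\mu g \in I(Z')_{t+1}$ still does not vanish at $p$, so the difference equals $1$ at degree $t+1$ as well. Hence $t \mapsto h_Z(t) - h_{Z'}(t)$ is non-decreasing, which is exactly the desired inequality $Dh_{Z'}(t) \leq Dh_Z(t)$.

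For (ii), I would use that, since $R/\lambda R$ is generated in degree $1$, vanishing of some graded piece $(R/\lambda R)_t$ forces vanishing of all higher pieces. Equivalently, once $h_Z$ fails to grow at some degree, it remains constant thereafter. For $d$ sufficiently large one can separate the points of $Z$ using products of linear forms, which makes $\bbC[V]_d \to \bbC^{\ell(Z)}$ surjective; thus $h_Z$ eventually reaches $\ell(Z)$. Combining these two facts shows $h_Z$ must strictly increase until it attains the value $\ell(Z)$ and be constantly equal to $\ell(Z)$ afterwards, which is precisely the content of (ii).

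For (iii), I would apply Macaulay's growth theorem to the Artinian algebra $A = R/\lambda R$. When $\dim A_t = k$ satisfies $k \leq t$, the Macaulay representation of $k$ in base $t$ is the thin decomposition $k = \binom{t}{t} + \binom{t-1}{t-1} + \cdots + \binom{t-k+1}{t-k+1}$ (each summand equal to $1$), and its Macaulay shift is $k^{\langle t \rangle} = \binom{t+1}{t+1} + \cdots + \binom{t-k+2}{t-k+2} = k$, so the Macaulay bound yields $\dim A_{t+1} \leq k$, as required. The only nontrivial ingredient in the whole proposition is this invocation of Macaulay's bound; everything else reduces to linear algebra on the $(\cdot \lambda)$ exact sequence, so (iii) is the main obstacle.
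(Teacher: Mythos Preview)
The paper does not supply its own proof of this proposition; it simply refers to \cite{Chi:HilbertFunctionsTensorAn} for details. So there is nothing in the paper to compare your argument against directly.

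Your proof is correct and follows the standard route one finds in that reference and elsewhere. The Artinian reduction $A = R/\lambda R$ with $\lambda$ a nonzerodivisor on $R$ is exactly the right tool: it identifies $Dh_Z$ with the Hilbert function of a standard graded Artinian algebra, from which (ii) is immediate and (iii) is the Macaulay bound in the ``thin'' regime $k \leq t$, where $k^{\langle t\rangle} = k$ as you compute. Your argument for (i), via the one-point-at-a-time monotonicity of $t \mapsto h_Z(t) - h_{Z'}(t)$, is clean and avoids any delicate compatibility between the Artinian reductions of $Z$ and $Z'$.

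One very small remark: in the setup you should say explicitly that a generic linear form $\lambda$ vanishes at no point of $Z$ because $Z$ is finite, and that this makes $\lambda$ a nonzerodivisor on $R$ because $I(Z)$ is radical (so the associated primes of $R$ are the ideals of the points). You clearly have this in mind, but stating it costs one line and removes any doubt about the exactness of the sequence.
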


Understanding what functions $h$ can occur as Hilbert functions of sets of points is the object of a long line of research. Macaulay characterized the \emph{maximal growth} of $h_Z$. In \cite{BigGerMig:GeometricConsequencesMacaulay}, strong consequences of Macaulay's result are given; we state here a restricted version of \cite[Theorem 3.6]{BigGerMig:GeometricConsequencesMacaulay}, extending a result from \cite{Davis:CompleteIntersections} in the case of $\bbP^2$.
\begin{theorem}\label{thm: BGM}
 Let $Z \subseteq \bbP V$ be a finite set of points. If $s := Dh_Z(t_0) = Dh_Z(t_0+1) \leq t_0$ for some $t_0$ then there exists a reduced curve $C \subseteq \bbP V$ of degree $s$ such that, setting $Z' = Z \cap C$, one has 
 \begin{itemize}
  \item $Dh_{Z'}(t) = Dh_C(t)$ for $t \leq t_0+1$;
  \item $Dh_{Z'}(t) = Dh_Z(t)$ for $t \geq t_0$.
 \end{itemize}
\end{theorem}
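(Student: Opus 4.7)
The plan is to deduce the statement from Macaulay's bound on the growth of Hilbert functions combined with Gotzmann's persistence theorem, following the strategy of Bigatti--Geramita--Migliore. First I would pass to an Artinian reduction: for a generic linear form $\ell \in V^*$ not vanishing at any point of $Z$, the multiplication map $\cdot\ell$ is injective on the $1$-dimensional Cohen--Macaulay ring $\bbC[V]/I(Z)$, so the quotient $A := \bbC[V]/(I(Z)+(\ell))$ is Artinian with Hilbert function $h_A(t) = h_Z(t) - h_Z(t-1) = Dh_Z(t)$ for every $t$. The hypothesis then becomes $h_A(t_0) = h_A(t_0+1) = s \leq t_0$, which is precisely the extremal equality case in Macaulay's growth bound applied to the Artinian quotient of the polynomial ring $\bbC[V]/(\ell) \cong \bbC[y_0, \dots, y_{n-1}]$.

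The second step is to invoke Gotzmann's persistence theorem in this setting: the stationarity $h_A(t_0) = h_A(t_0+1)$ at a value $\leq t_0$ forces the saturation of the ideal of $A$ in $\bbC[V]/(\ell)$ to have a strongly constrained ``lex-segment''-type structure, from which one can read off a $1$-dimensional subscheme of $\bbP V$ of degree $s$. Lifting this structure back from the hyperplane $\{\ell = 0\}$ and passing to the reduced scheme, one obtains the desired reduced curve $C \subseteq \bbP V$ of degree exactly $s$; since $Z$ is itself reduced, the intersection $Z' := Z \cap C$ is a well-defined subset of $Z$.

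Finally, to establish the two equalities for $Dh_{Z'}$, I would combine the monotonicity $Dh_{Z'}(t) \leq Dh_Z(t)$ from \autoref{prop: basics HF}(i) with the exact sequence $0 \to I_{Z \cup C} \to I_Z \oplus I_C \to I_{Z \cap C} \to 0$, and with the fact that the h-vector of a reduced curve of degree $s$ in $\bbP V$ stabilizes at the value $s$ from degree $s - 1$ onward. The equality $Dh_{Z'}(t) = Dh_C(t)$ for $t \leq t_0 + 1$ follows because in these low degrees $Z'$ saturates the full expected growth dictated by $C$, while $Dh_{Z'}(t) = Dh_Z(t)$ for $t \geq t_0$ follows because past the critical degree all new generators of $I(Z)$ must come from $I(C)$. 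The main obstacle is the extraction step in the second paragraph: producing a \emph{reduced one-dimensional} curve of degree exactly $s$, rather than a higher-dimensional or non-reduced scheme, requires the careful saturation analysis characteristic of the Davis argument in $\bbP^2$ and its BGM extension to arbitrary $\bbP^n$, and is the technical heart of the result.
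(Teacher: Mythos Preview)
The paper does not prove this theorem. It is stated as a restricted version of \cite[Theorem 3.6]{BigGerMig:GeometricConsequencesMacaulay} (extending Davis's result in $\bbP^2$) and is quoted without proof as a tool from the literature. So there is no ``paper's own proof'' to compare against.

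Your outline is a faithful high-level sketch of the Bigatti--Geramita--Migliore argument: Artinian reduction by a generic linear form to convert $Dh_Z$ into an honest Hilbert function, recognition of $h_A(t_0)=h_A(t_0+1)=s\le t_0$ as the maximal-growth case of Macaulay's bound, and then Gotzmann persistence/saturation to produce a degree-$s$ subscheme whose lift gives the curve $C$. You also correctly flag the genuine difficulty: getting $C$ to be a \emph{reduced curve} of degree exactly $s$ (not a higher-dimensional or non-reduced scheme) is the technical heart of the BGM paper and requires more than Gotzmann alone --- one has to analyze the saturation of the initial ideal carefully and use that $Z$ is reduced. Your third paragraph is more heuristic than rigorous: the Mayer--Vietoris sequence you write down and the phrase ``all new generators of $I(Z)$ must come from $I(C)$'' gesture at the right phenomena but do not by themselves pin down the two equalities; in the actual BGM argument these follow from a more delicate comparison of $I(Z)$, $I(C)$, and their saturations in the relevant degree range. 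As a plan this is correct and well-targeted; as a proof it would need the saturation analysis filled in.
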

Intuitively, \autoref{thm: BGM} says that if $Dh_Z$ is constant and small on an interval, then a subset of points of $Z$ lies on a low degree curve. This is useful to deduce pathologies of certain sets of points and prove that under suitable genericity assumptions only certain Hilbert functions are possible.

Sets of points defining non-redundant  decompositions satisfy particular conditions that are reflected in their Hilbert function. An important one concerns the Cayley-Bacharach property. A finite set $Z \subseteq \bbP V$ satisfies the Cayley-Bacharach property in degree $t$, denoted $\CB(t)$, if, for every $p \in Z$, every form of degree $d$ vanishing on $Z \setminus \{p\}$ vanishes at $p$ as well; in other words, $Z$ satisfy $\CB(t)$ if for every $p \in Z$, $I(Z \setminus \{p\})_t = I(Z)_t$.

The role of the Cayley-Bacharach property in the study of decompositions of homogeneous polynomials is stated in the following result.
\begin{proposition}[\cite{AngChi:IdentifiabilityTernaryForms}, Proposition 2.25]\label{prop: CB for nonredundant}
 Let $F \in S^d V$ and let $A,B \subseteq \bbP V$ be non-redundant disjoint decompositions of $F$. The $A \cup B$ satisfies the Cayley-Bacharach property in degree $d$.
\end{proposition}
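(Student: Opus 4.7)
The plan is to exploit the duality (apolarity) between $S^d V$ and $S^d V^*$. Concretely, $\bbC[V]_d = S^d V^*$ and there is a perfect pairing $\langle-,-\rangle \colon S^d V^* \times S^d V \to \bbC$ under which, for every $L \in V$ and every $g \in S^d V^*$, one has $\langle g, L^d\rangle = c \cdot g(L)$ for a nonzero constant $c$ depending only on $d$. In particular, for a finite set $C = \{[L_1] \vvirg [L_s]\} \subseteq \bbP V$, a form $g \in \bbC[V]_d$ lies in $I(C)_d$ if and only if $\langle g, L_i^d \rangle = 0$ for every $i$, equivalently if and only if $g$ annihilates $\linspan{v_d(L_1) \vvirg v_d(L_s)} \subseteq \bbP S^d V$ via the apolarity pairing.

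With this observation in hand, the key step is immediate. Suppose $A = \{[L_1] \vvirg [L_a]\}$ and $B = \{[M_1] \vvirg [M_b]\}$ are non-redundant disjoint decompositions of $F$, so that
\[
F = \sum_{i=1}^{a} \alpha_i L_i^d = \sum_{j=1}^{b} \beta_j M_j^d,
\]
with all $\alpha_i, \beta_j$ nonzero. Fix a point $p \in A \cup B$; without loss of generality $p = [L_1]$. Let $g \in I\bigl((A \cup B) \setminus \{p\}\bigr)_d$. Pairing $g$ against each of the two expressions of $F$ and using apolarity:
\[
\langle g, F\rangle = \sum_{j=1}^{b} \beta_j \langle g, M_j^d\rangle = 0,
\]
since $g$ vanishes on $B$; while also
\[
\langle g, F\rangle = \alpha_1 \langle g, L_1^d\rangle + \sum_{i=2}^{a} \alpha_i \langle g, L_i^d \rangle = \alpha_1 \langle g, L_1^d\rangle,
\]
since $g$ vanishes on $A \setminus \{p\}$. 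Equating the two gives $\alpha_1 \langle g, L_1^d\rangle = 0$, and since $\alpha_1 \neq 0$ (by non-redundancy of $A$) we conclude $g(L_1) = 0$, i.e.\ $g$ vanishes at $p$. The case $p \in B$ is symmetric. Hence $A \cup B$ satisfies $\CB(d)$.

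The argument is short and essentially formal, so the only real delicate point is bookkeeping with the apolarity pairing: one must be careful that the two hypotheses (disjointness of $A$ and $B$, and non-redundancy of each) are used exactly where needed. Disjointness ensures that $(A \cup B) \setminus \{p\}$ decomposes cleanly into $A \setminus \{p\}$ and $B$ (so that the two vanishing statements above are correct), while non-redundancy is precisely what gives $\alpha_1 \neq 0$, without which the final step would collapse. No further machinery (Hilbert function estimates, liaison, etc.) is needed for this statement; the Cayley--Bacharach condition here is a direct manifestation of the dual description of decompositions.
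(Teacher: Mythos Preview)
Your argument is correct. The paper does not supply its own proof of this proposition: it is quoted verbatim from \cite{AngChi:IdentifiabilityTernaryForms} (Proposition~2.25) and used as a black box. The apolarity argument you give is the standard one and matches the proof in the cited reference; the only hypotheses used are exactly the ones you flag, namely disjointness (so that $(A\cup B)\setminus\{p\}$ still contains all of the other decomposition) and non-redundancy (so that the coefficient $\alpha_1$ is nonzero).
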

In particular, the failure to satisfy the Cayley-Bacharach property allows one to rule out the existence of certain decompositions. This will be achieved using the fact that the Cayley-Bacharach property poses strong conditions on the Hilbert function of a set of points. More precisely, the following holds, see \cite[Thm. 4.9]{AngChiVan:IdentifiabilityBeyondKruskal}.
\begin{theorem}\label{thm: CB implies HF}
 Let $Z \subseteq \bbP V$ be a set of points satisfying $\CB(t)$. Then, for every $s \leq t+1$
 \[
  Dh_Z(0) + \cdots + Dh_Z(s) \leq Dh_Z(t+1) + \cdots + Dh_Z(t+1-s).
 \]
\end{theorem}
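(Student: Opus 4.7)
The plan is to reduce the statement to a property of an Artinian reduction of $A := R/I(Z)$, where $R = \bbC[V]$, and then extract the cumulative inequality from Macaulay duality and a socle analysis. First, I would pick a general linear form $L \in V^*$; since $L$ vanishes on no point of $Z$, multiplication by $L$ is a non-zero-divisor on $A$, so the Artinian quotient $\bar A := A/LA$ has Hilbert function $\bar h(i) := \dim \bar A_i = Dh_Z(i)$, obtained from the short exact sequence $0 \to A(-1) \xto{\cdot L} A \to \bar A \to 0$. The target inequality then reads
\[
 \sum_{i=0}^{s} \bar h(i) \leq \sum_{i=0}^{s} \bar h(t+1-i) \qquad \text{for every } s \leq t+1.
\]

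Second, I would translate $\CB(t)$ into a socle condition on $\bar A$: namely, $\operatorname{Soc}(\bar A)$ is concentrated in degrees $\geq t+1$. The intuition is that Macaulay duality interchanges $\bar A$ with its apolar module, and a nonzero socle element in degree $i$ corresponds to a ``new'' apolar relation, equivalently to an element of $I(Z \setminus \{p\})_i \setminus I(Z)_i$ for some $p \in Z$; $\CB(t)$ rules this out for $i \leq t$, and the converse is the same identification read backwards. Verifying this translation carefully is the first substantive step. As an immediate corollary, for each $i \leq t$ the multiplication pairing $\bar A_i \otimes \bar A_{t+1-i} \to \bar A_{t+1}$ is non-degenerate on the $\bar A_i$ factor, but by itself this yields only weak term-by-term estimates.

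The substantial work is to extract the \emph{cumulative} inequality from the socle condition, since the naive term-by-term bound $\bar h(i) \leq \bar h(t+1-i)$ is false in general for non-Gorenstein $\bar A$. My plan is to iterate a Gorenstein-quotient construction: pick a socle generator of $\bar A$ of maximal degree $d \geq t+1$, form the Gorenstein quotient it generates (whose h-vector is symmetric about $d/2$), record its symmetric contribution to both sides of the inequality, and strip it off $\bar A$. Iterating until $\bar A$ is exhausted should decompose both sides of the target inequality into sums of symmetric Gorenstein h-vectors centred at degrees $\geq t+1$, from which the desired cumulative bound follows formally.

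The main obstacle is this iterative decomposition: after stripping off a Gorenstein piece, the residual algebra need not retain the ``socle in degrees $\geq t+1$'' property with respect to the same $L$, so the iteration as stated can fail. Handling this requires either replacing $L$ at each step (while tracking how $\bar h$ evolves) or a more robust induction on the length $\ell(Z)$: remove a well-chosen point $p \in Z$, verify that $Z \setminus \{p\}$ satisfies a weakened Cayley-Bacharach condition in the appropriate degree, apply the inductive hypothesis, and then recover the inequality for $Z$ using the single-point growth $Dh_Z(i) - Dh_{Z \setminus \{p\}}(i) \in \{0,1\}$ together with \autoref{prop: basics HF}. This bookkeeping is the delicate heart of the argument.
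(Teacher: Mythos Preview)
The paper does not contain a proof of this statement: it is quoted from \cite[Thm.~4.9]{AngChiVan:IdentifiabilityBeyondKruskal} and used as a black box throughout, so there is nothing in the paper to compare your argument against.

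On the merits of your outline: the reduction to the Artinian reduction $\bar A$ and the translation of $\CB(t)$ into the condition $\operatorname{Soc}(\bar A)\subseteq \bar A_{\geq t+1}$ are correct and standard. You are also right that the naive Gorenstein-stripping procedure can fail to preserve the socle bound, and that term-by-term inequalities $\bar h(i)\leq \bar h(t+1-i)$ are unavailable in general. Your fallback, induction on $\ell(Z)$, is in the right spirit but as stated has a real gap: removing a point $p$ does give $Dh_{Z\setminus\{p\}}=Dh_Z$ except at a single degree $d_p\geq t+1$ (this much $\CB(t)$ buys you), but $Z\setminus\{p\}$ need \emph{not} satisfy $\CB(t)$, nor any $\CB(t')$ strong enough to close the induction, for an arbitrary $p$. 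You would need either a careful choice of $p$ together with a verification that the residual set still has controlled socle, or a different inductive parameter altogether; neither is supplied. As written, the proposal is a reasonable plan with the hard step explicitly left open, not a proof.
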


\subsection{First results}

In this section we prove some simple technical results toward the proof \autoref{thm: main theorem}. The first immediate observation is that $n+2$ points in LGP in $\bbP V$ can be normalized via the action of $\GL(V)$. We record a slightly more general fact in the following result, see e.g. \cite[Ch. 10]{Harris:AlgGeo}.
\begin{lemma}\label{lemma: orbits}
Let $V$ be a vector space with $\dim V = n+1$ and let $e_0 \vvirg e_n$ be a basis of $V$. Let
\[
 \Omega = \{ A \subseteq \bbP V: \ell(A) = n+2, \langle A \rangle = \bbP V\} \subseteq (\bbP V)^{(n+2)}.
\]
Then $\Omega$ is Zariski open in $(\bbP V)^{(n+2)}$; the action of $\GL(V)$ on $\Omega$ has exactly $n$ orbits $K_2 \vvirg K_{n+1}$. The elements of $K_r$ are subsets having Kruskal rank exactly $r$ and they are all equivalent to
 \[
  A_r = \{ [e_0] \vvirg [e_n] , [e_0 + \cdots + e_{r-1}]\}.
 \]
Moreover, $K_r \subseteq \bar{K_{r+1}}$ and for every $r$, $\bar{K_r}$ is irreducible of dimension $n(n+1) + r-1$.
\end{lemma}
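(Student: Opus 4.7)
The plan is to tackle the four assertions of \autoref{lemma: orbits}---openness of $\Omega$, the orbit decomposition by Kruskal rank, the chain of closures $K_r \subseteq \bar{K_{r+1}}$, and the dimension formula---sequentially, all extracted from the explicit normal forms $A_r$. Openness of $\Omega$ is immediate: after picking homogeneous coordinates on $\bbP V$, the condition $\langle A \rangle = \bbP V$ amounts to the non-vanishing of at least one maximal minor of the $(n+1) \times (n+2)$ coefficient matrix, which is an open condition invariant under permuting columns and rescaling, so it descends to $(\bbP V)^{(n+2)}$.

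For the orbit decomposition, I would take $A = \{p_0 \vvirg p_{n+1}\} \in \Omega$ and use spanning to select $n+1$ linearly independent points; applying an element of $\GL(V)$ sends them to $[e_0] \vvirg [e_n]$, making the remaining point of the form $[\sum a_i e_i]$ with nonempty support $T = \{i : a_i \neq 0\}$. Rescaling the basis vectors indexed by $T$ (this acts as an element of $\GL(V)$ fixing every $[e_i]$) and permuting basis indices bring $A$ to $A_r$ with $r = |T|$. The hypothesis $\ell(A) = n+2$ forces $r \geq 2$, since $r = 1$ would make $p_{n+1}$ coincide with one of the $[e_i]$. A direct check shows the Kruskal rank of $A_r$ is exactly $r$: the only potentially dependent $r$-subsets involve $[e_0 + \cdots + e_{r-1}]$ together with $r-1$ basis points, but $e_0 + \cdots + e_{r-1}$ cannot lie in the span of fewer than $r$ of the $e_j$'s; meanwhile the $(r+1)$-subset $\{[e_0] \vvirg [e_{r-1}], [e_0 + \cdots + e_{r-1}]\}$ is manifestly dependent. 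Since Kruskal rank is a $\GL(V)$-invariant, this simultaneously shows that $K_r$ is the locus of subsets of Kruskal rank exactly $r$ and that the $K_r$'s are pairwise distinct orbits.

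For the closure relation, the explicit family $A_r(t) = \{[e_0] \vvirg [e_n], [e_0 + \cdots + e_{r-1} + t e_r]\}$ lies in $K_{r+1}$ for $t \neq 0$ and specializes to $A_r$ at $t = 0$, so $A_r \in \bar{K_{r+1}}$; $\GL(V)$-invariance of $\bar{K_{r+1}}$ then propagates this to the whole orbit $K_r$. Irreducibility of $\bar{K_r}$ is automatic since $K_r$ is the image of the irreducible group $\GL(V)$ under the orbit map.

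For the dimension, I would compute the stabilizer of the ordered tuple $([e_0] \vvirg [e_n], [e_0 + \cdots + e_{r-1}])$ inside $\GL(V)$. The first $n+1$ conditions force $g$ to be diagonal with entries $\lambda_0 \vvirg \lambda_n \in \bbC^*$, and the final condition $g[e_0 + \cdots + e_{r-1}] = [e_0 + \cdots + e_{r-1}]$ forces $\lambda_0 = \cdots = \lambda_{r-1}$, giving a stabilizer of dimension $n - r + 2$. Hence the orbit of the ordered tuple has dimension $(n+1)^2 - (n-r+2) = n(n+1) + r - 1$, and since $(\bbP V)^{(n+2)}$ differs from $(\bbP V)^{n+2}$ by the finite quotient by $\frakS_{n+2}$, the unordered orbit $K_r$ has the same dimension. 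The main bookkeeping concern is that the integer $r$ produced in the orbit-decomposition step must be intrinsic to $A$ (i.e., independent of the choice of $(n+1)$-independent subset); this is handled by identifying $r$ with the Kruskal rank of $A$, which is manifestly $\GL(V)$-invariant, so no ambiguity remains.
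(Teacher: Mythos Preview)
Your proof is correct and largely parallels the paper's: the normalization to $A_r$, the identification of $r$ with the Kruskal rank, and the irreducibility via orbit closures all match. The one genuine difference is the dimension count. The paper lifts $K_r$ to an ordered version $J_r \subseteq (\bbP V)^{n+2}$ and computes $\dim J_r$ by projecting onto the first $n+1$ factors: this projection is dominant onto $(\bbP V)^{n+1}$ with fibers $\bbP\langle v_0,\ldots,v_{r-1}\rangle \simeq \bbP^{r-1}$, giving $n(n+1)+(r-1)$. You instead compute the stabilizer of the ordered tuple in $\GL(V)$ directly and apply orbit--stabilizer. Your route is arguably more self-contained (no incidence variety needed), while the paper's fibration picture makes the parameter count geometrically transparent; both reach the same number with equal ease.
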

\begin{proof}
The first part of the statement is classical. If $A \in (\bbP V)^{(n+2)}$ is a set of points having Kruskal rank $r$ and such that $\langle A \rangle = \bbP V$, then we may assume $A = \{ [v_0] \vvirg [v_n], [v_{n+1}]\}$ with $v_0 \vvirg v_n$ linearly independent and $v_{n+1} \in \langle v_0 \vvirg v_{r-1}\rangle$. Since $v_0 \vvirg v_n$ are linearly independent, there is an element $g \in \GL(V)$ such that $g v_i = e_i$ for $i = 0 \vvirg n$; as a result $g v_{n+1} = \lambda_0 e_0 + \cdots + \lambda_{r-1} e_{r-1}$ for some $\lambda_j \in \bbC \setminus \{ 0 \}$. Define an element $h \in \GL(V)$ by $h e_i = \lambda_i^{-1} e_i$ for $i \leq r-1$ and $h e_i = e_i$ for $i \geq r$. We conclude that $hg A = A_r$.

The condition that $K_r \subseteq \bar{K_{r+1}}$ is clear; moreover $\bar{K_r}$ is irreducible, because it is the closure of an orbit under the action of $\GL(V)$. Finally, to compute the dimension of $\bar{K_r}$, consider the diagram
\[
\xymatrix{
& (\bbP V)^{ n+2} \ar[dl]_{\pi_\frakS} \ar[dr]^{\pi_{n+2}} & \\
(\bbP V)^{(n+2)} & & (\bbP V)^{n+1}
}
\]
where $\pi_\frakS$ is the projection onto the quotient $(\bbP V)^{(n+2)} = (\bbP V)^{ n+2}/\frakS_{n+2}$ and $\pi_{n+2}$ is the projection onto the first $n+1$ factors. The preimage $\pi_\frakS (K_r)$ is the union of several irreducible components, all isomorphic and all surjecting onto $K_r$; one of these components is 
\[
J_r = \left \{ ([v_0] \vvirg [v_{n+1}]) \in (\bbP V)^{ n+2} :  
\begin{array}{l}
\bbP V = \langle [v_0] \vvirg [v_n] \rangle \\
v_{n+1} \in \langle v_0 \vvirg v_{r-1}\rangle
\end{array}
\right\}
\]
Since $\pi_\frakS$ is generically finite, $\dim K_r = \dim J_r$. The restriction $\pi_{n+2} : J_r \to (\bbP V)^{n+1}$ is dominant; the fiber over a generic element $([v_0] \vvirg [v_n])$ is the linear span $\bbP \langle v_0 \vvirg v_{r-1}\rangle$ which has dimension $r-1$. We conclude
\[
 \dim K_r = \dim J_r = \dim ((\bbP V)^{n+1}) + \dim \bbP^{r-1}(r-1) = n(n+1) + r-1.
\]
This concludes the proof.
\end{proof}

We observe that forms admitting non-redundant decompositions of length $n+2$ are necessarily concise:
\begin{lemma}\label{lemma: minimal plus one concise}
 Let $F \in S^d V$ be a homogeneous form with $\dim V = n+1$. Let $A \subseteq \bbP V$ be a set of $n+2$ points with $\langle A \rangle = \bbP V$. If $A$ is a non-redundant decomposition of $F$ then $F$ is concise.
\end{lemma}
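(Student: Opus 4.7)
The plan is to argue by contradiction. Suppose that $F$ is not concise: then the characterization of conciseness recalled in the introduction furnishes a nonzero linear form $\phi \in V^*$ with $\partial_\phi F = 0$. Writing $F = \sum_{i=1}^{n+2} \alpha_i L_i^d$ with each $\alpha_i \neq 0$ by non-redundancy, direct differentiation yields
\[
0 = \partial_\phi F = d \sum_{i=1}^{n+2} \alpha_i \phi(L_i) L_i^{d-1}
\]
in $S^{d-1}V$.

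The heart of the argument is to show that the $(d-1)$-th powers $L_1^{d-1}, \ldots, L_{n+2}^{d-1}$ are linearly independent in $S^{d-1}V$. Granting this claim, the displayed identity forces $\alpha_i \phi(L_i) = 0$ for every $i$; since $\alpha_i \neq 0$, this gives $\phi(L_i) = 0$ for all $i$, and since $\langle A \rangle = \bbP V$ implies that the $L_i$ span $V$, we conclude $\phi = 0$, a contradiction.

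To establish the linear independence, I would use $\langle A \rangle = \bbP V$ to extract a basis of $V$ from $A$; after relabelling, $L_1, \ldots, L_{n+1}$ form a basis, and we may write $L_{n+2} = \sum_{i=1}^{n+1} c_i L_i$. Pairwise distinctness of the projective points $[L_i]$ prevents $L_{n+2}$ from being a scalar multiple of any single $L_j$, so at least two of the coefficients $c_i$ are nonzero. Choosing coordinates so that $L_i$ corresponds to $x_i$ for $i \leq n+1$, the powers $L_i^{d-1} = x_i^{d-1}$ are distinct pure monomials, whereas the expansion
\[
L_{n+2}^{d-1} = \Bigl( \sum_{i=1}^{n+1} c_i x_i \Bigr)^{d-1}
\]
contains, whenever $c_i, c_j \neq 0$, the mixed monomial $x_i^{d-2} x_j$ with nonzero coefficient $(d-1) c_i^{d-2} c_j$, provided $d - 1 \geq 2$. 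This mixed monomial is not in $\linspan{x_1^{d-1}, \ldots, x_{n+1}^{d-1}}$, so $L_{n+2}^{d-1}$ does not lie in the span of the other $L_i^{d-1}$, establishing independence.

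The main technical step is this linear-independence claim; it is where the hypothesis $d \geq 3$ enters, and the paper's focus on cubic forms makes this automatic.
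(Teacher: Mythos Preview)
Your proof is correct and takes a route genuinely different from the paper's. The paper works directly: it normalizes $A$ via \autoref{lemma: orbits} to $\{[x_0],\ldots,[x_n],[L]\}$ with $L = x_0+\cdots+x_n$, then exhibits explicit $(d-1)$-th order partial derivatives of $F$ (namely $\partial_{x_0}^{d-2}\partial_{x_1}F$ and $\partial_{x_j}^{d-1}F$) which recover $L$ and each $x_j$, thereby showing that the order-$(d-1)$ partials span $V$. You instead argue by contradiction through a single first-order derivative $\partial_\phi$, reducing the problem to the clean auxiliary statement that $L_1^{d-1},\ldots,L_{n+2}^{d-1}$ are linearly independent, which you then verify by a mixed-monomial argument. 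The two arguments are essentially dual: the paper checks that the order-$(d-1)$ partials span, while you check that the order-$1$ partials are independent. Your formulation has the advantage of isolating a self-contained fact (linear independence of $v_{d-1}(A)$ for any $n+2$ distinct points spanning $\bbP^n$), whereas the paper's computation is slightly more ad hoc but avoids the contradiction framing. Both proofs require $d\geq 3$ for the same reason---in the paper it is needed so that $\partial_{x_0}^{d-2}\partial_{x_1}$ is an operator of positive order in each variable, and in yours so that $(d-1)$-th powers can contain mixed monomials---and you are right that the lemma is false for $d=2$ (for instance $x^2+y^2-\tfrac{1}{2}(x+y)^2 = \tfrac{1}{2}(x-y)^2$ is non-concise with a non-redundant decomposition of length $3$).
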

\begin{proof}
By \autoref{lemma: orbits}, we may assume $A \in K_r$ for some $r$ and we can easily reduce to the case $r= n$. Write $L = x_0 + \cdots + x_n$ and assume 
\[
 F = \alpha_0 x_0^d + \cdots + \alpha_n x_n^d + \alpha_{n+1} L^d
 \]
where $\alpha_j \neq 0$ for every $j$.

Let $H$ be the space of $(d-1)$-th order partial derivatives of $F$. Up to scaling, we have 
\[
 \frac{\partial^{d-1}}{\partial x_0^{d-2}\partial x_1} F = \alpha_{n+1} L;
\]
hence $L \in H$. Moreover, $\frac{\partial^{d-1}}{\partial x_j^d} F = \alpha_j x_j + \alpha_{n+1} L$, showing $x_j \in H$ for every $j$. This shows $H = V$, therefore $F$ is concise.
\end{proof}

Similarly, forms admitting non-redundant decompositions of length exactly $n+1$ are concise, and their decomposition is unique. The uniqueness of the decomposition can be obtained via an easy calculation with the Hilbert function, but it is also a consequence of the classical Kruskal criterion for tensors, \cite{Krusk:ThreeWayArrays}.
\begin{lemma}\label{cor: uniqueness for LI decomp}
 Let $F \in S^d V$ be a homogeneous form, with $\dim V = n+1$ with $d \geq 3$. Let $A \subseteq \bbP V$ be a non-redundant decomposition of $F$ with $A$ linearly independent in $\bbP V$ and $\ell(A) = n+1$. Then $F$ is concise, $\rmR(F) = n+1$ and $A$ is the unique minimal decomposition of $F$.
 \end{lemma}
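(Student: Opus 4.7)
The plan is to normalize $A$ via $\GL(V)$, extract the catalecticant-style spaces spanned by the partial derivatives of $F$, and intersect one of them with the Veronese variety $v_{d-1}(\bbP V)$. Since $A$ is linearly independent with $\ell(A)=n+1$, representatives of $A$ form a basis of $V$; up to a linear change of coordinates I may assume $A=\{[e_0]\vvirg[e_n]\}$ and $F=\sum_{i=0}^n \alpha_i x_i^d$ with all $\alpha_i\neq 0$ by non-redundance. The pure partial derivatives $\frac{\partial^{d-1} F}{\partial x_i^{d-1}}=d!\,\alpha_ix_i$ span $V$, so $F$ is concise; together with the fact that $A$ is a decomposition of length $n+1$, this gives $\rmR(F)=n+1$ and $A$ is minimal.

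For uniqueness, let $B=\{[M_0]\vvirg[M_n]\}$ be any non-redundant minimal decomposition, with $F=\sum_j\beta_jM_j^d$ and $\beta_j\neq 0$. Differentiating the $B$-expression shows that every $(d-1)$-th order partial derivative of $F$ lies in $\langle M_0\vvirg M_n\rangle$; since this span equals $V$ by conciseness, the $M_j$ span $V$ and hence form a basis. Differentiating only once: from $A$ the first partials span $W:=\langle x_0^{d-1}\vvirg x_n^{d-1}\rangle\subseteq S^{d-1}V$, whereas from $B$ they lie in $\langle M_0^{d-1}\vvirg M_n^{d-1}\rangle$. As $M_0\vvirg M_n$ is a basis, a linear change of variables shows that $M_0^{d-1}\vvirg M_n^{d-1}$ are linearly independent, so by dimension count $W=\langle M_0^{d-1}\vvirg M_n^{d-1}\rangle$ and every point $[M_j^{d-1}]$ lies on $v_{d-1}(\bbP V)\cap \bbP W$.

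The crux of the argument is then the calculation of $v_{d-1}(\bbP V)\cap\bbP W$. Writing $L=\sum c_ix_i$, the monomial $x_i^{d-2}x_j$ appears in $L^{d-1}$ with coefficient $(d-1)c_i^{d-2}c_j$; for $L^{d-1}\in W$ these coefficients must vanish for all $i\neq j$. Since $d\geq 3$, i.e.\ $d-2\geq 1$, this forces at most one $c_i$ to be non-zero, so $L$ is proportional to some $x_i$ and the intersection consists of exactly the $n+1$ points $[x_i^{d-1}]$. The $n+1$ points $[M_j^{d-1}]$ are pairwise distinct (as the $M_j$ are pairwise non-proportional) and lie in this set of cardinality $n+1$, so they coincide with the set $\{[x_i^{d-1}]\}$; recovering each $[M_j]$ from its $(d-1)$-th power yields $B=A$. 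This Veronese-intersection step is both the main obstacle and the place where the hypothesis $d\geq 3$ enters essentially: for $d=2$ one has $v_1(\bbP V)\cap \bbP W=\bbP W$ and the argument would collapse, consistent with the well-known non-uniqueness of decompositions of quadratic forms.
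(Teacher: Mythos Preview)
Your proof is correct and takes a genuinely different route from the paper. The paper normalizes $A$ just as you do, observes that $F=x_0^d+\cdots+x_n^d$ is concise of rank $n+1$, and then invokes Kruskal's criterion as a black box: a linearly independent set of $n+1$ points has Kruskal rank $n+1$, which is enough for Kruskal's inequality to fire when $d\geq 3$. Your argument instead proceeds by apolarity: you identify the span of the first partials as $W=\langle x_0^{d-1}\vvirg x_n^{d-1}\rangle$, show that any competing minimal decomposition $B$ must have its $(d-1)$-th powers lying in $\bbP W$, and then compute the set-theoretic intersection $v_{d-1}(\bbP V)\cap\bbP W$ directly from the monomial expansion. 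This is more elementary and fully self-contained---it does not appeal to Kruskal's theorem, which is itself a nontrivial combinatorial result---and it makes transparent exactly where $d\geq 3$ enters (namely $d-2\geq 1$ so that $c_i^{d-2}c_j=0$ forces $c_j=0$). The paper's approach, by contrast, is shorter and situates the lemma within the broader Kruskal framework that the rest of the paper leans on; your catalecticant computation is the classical Sylvester-style argument specialized to the Fermat cubic and would be the natural choice in a treatment that avoids citing Kruskal.
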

\begin{proof}
Since $A$ is linearly independent with $\ell(A) = n+1$, we may normalize it via the action of $\GL(V)$ and assume $A = \{ [x_0] \vvirg [x_n]\}$. Therefore $\langle v_d(A) \rangle = \langle [x_0^d] \vvirg [x_n^d]\rangle$ and $F = x_0^d + \cdots + x_n^d$. This shows that $F$ is concise with $\rmR(F) = n+1$.

The uniqueness of the decomposition follows by Kruskal's criterion, see \cite{Krusk:ThreeWayArrays}. The proof follows from the immediate fact that if $A \subseteq \bbP V$ is a set of linearly elements with $\ell(A) = n+1$, then $A$ is in LGP and in particular it has maximal Kruskal ranks. 
\end{proof}

A stronger result holds for forms admitting two non-redundant decompositions of length $n+1$ and $n+2$, respectively.
\begin{lemma}\label{lemma: minimal plus one for fermat}
 Let $F \in S^d V$ be a concise form with $d \geq 3$. Let $A \subseteq \bbP V$, $B \subseteq \bbP V$ be non-redundant decompositions of $F$ of length $n+1$ and $n+2$ respectively. Then $d = 3$, $\ell(A \cap B) \geq n-1$ and $(A \setminus B) \cup (B \setminus A)$ is collinear. 
\end{lemma}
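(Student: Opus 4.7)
My plan is to normalize $A$ using $\GL(V)$, extract identities from $(d-2)$-th order partial derivatives to force $d = 3$, and then induct on $n$ for the geometric conclusion.

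\emph{Normalization and derivative identities.} By \autoref{cor: uniqueness for LI decomp}, the decomposition $A$ is linearly independent and $\rmR(F) = n+1$. After a $\GL(V)$ change of coordinates, we may take $A = \{[x_0], \ldots, [x_n]\}$ and $F = x_0^d + \cdots + x_n^d$. Write $B = \{[\ell_0], \ldots, [\ell_{n+1}]\}$, $\ell_j = \sum_i M_{ij} x_i$, $F = \sum_j \beta_j \ell_j^d$ with $\beta_j \neq 0$. Applying $\partial_{i_1} \cdots \partial_{i_{d-2}}$ to both expressions of $F$ yields, up to a constant,
\[
x_i^2 = \sum_j \beta_j M_{ij}^{d-2} \ell_j^2 \quad (\text{all } i_s = i), \qquad
0 = \sum_j \beta_j \Bigl(\prod_{s=1}^{d-2} M_{i_s j}\Bigr) \ell_j^2 \quad (\text{otherwise}).
\]

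\emph{Forcing $d = 3$.} Let $V_2 := \langle \ell_j^2 : j \rangle \subseteq S^2 V$; the first identity gives $V_2 \supseteq \langle x_i^2\rangle$, so $\dim V_2 \in \{n+1, n+2\}$. If $\dim V_2 = n+1$, then each $\ell_j^2 \in \langle x_i^2\rangle$ has no mixed monomials, so each $\ell_j$ is a coordinate vector and $B \subseteq A$, contradicting $|B|>|A|$. If $\dim V_2 = n+2$ and $d \geq 4$, then $\{\ell_j^2\}$ is linearly independent; the second identity with the multi-index $(i, k, \ldots, k)$ ($d-3$ copies of $k$, $i \neq k$) then forces $M_{ij} M_{kj}^{d-3} = 0$ for every $j$, so each column of $M$ has support at most one, again giving $B \subseteq A$, a contradiction. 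Hence $d = 3$.

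\emph{Induction on $n$ for $d = 3$.} Induct on $n$ with base $n = 1$ trivial ($\bbP V$ is a line). Let $C = A \cap B$, $k = |C|$; reindex so $C = \{[x_0], \ldots, [x_{k-1}]\}$, rescale $\ell_j = x_{i(j)}$ for $j \in C$, write $\beta_i$ for the $B$-coefficient of $x_i^3$ when $i \in I_0 := \{0,\ldots,k-1\}$, and set $I_1 := \{k,\ldots,n\}$. Subtracting common terms defines
\[
G := \sum_{i \in I_1} x_i^3 + \sum_{i \in I_0}(1-\beta_i) x_i^3 = \sum_{j \in B \setminus C}\beta_j \ell_j^3,
\]
with $I_0^* := \{i \in I_0 : \beta_i \neq 1\}$ and $V' := \langle x_i : i \in I_1 \cup I_0^*\rangle$. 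Then $G \in S^3 V'$ is concise with non-redundant decompositions $A_G := \{[x_i] : i \in I_1 \cup I_0^*\}$ (linearly independent, length $|I_1|+|I_0^*|$) and $B_G := \{[\ell_j] : j \in B \setminus C\}$ (length $n+2-k$). Non-redundancy of $B_G$ for $G$ follows from \autoref{cor: uniqueness for LI decomp}: any proper decomposing subset of $B_G$ would combine with $\{[x_i] : i \in I_0\}$ into a length-$\leq n+1$ decomposition of $F$, which by that corollary must equal $A$, forcing some $[\ell_j] \in A$, a contradiction.

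\emph{Conclusion and main obstacle.} If $|I_0^*| \geq 1$, then $|B_G| \leq |A_G| = \dim V' = \rmR(G)$, so $B_G$ is a linearly independent minimal decomposition of $G$; \autoref{cor: uniqueness for LI decomp} then yields $B_G = A_G$, contradicting $A_G \cap B_G \subseteq A \cap (B\setminus A) = \emptyset$. Hence $|I_0^*| = 0$, so $A_G = A \setminus B$, $B_G = B \setminus A$, and $\dim V' = n+1-k$. If $k = 0$, then $Z = A \sqcup B$ satisfies $\CB(3)$ by \autoref{prop: CB for nonredundant}; \autoref{thm: CB implies HF} at $s = 1$ gives $Dh_Z(3) + Dh_Z(4) \geq n+1$, and combined with $\sum_{t \geq 2} Dh_Z(t) = n + 2$ and \autoref{prop: basics HF}(iii) this forces $Dh_Z(2) = 1$ and $Dh_Z(3), Dh_Z(4) \leq 1$, so $n \leq 1$. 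If $k \geq 1$, the inductive hypothesis applied in $V'$ (of dimension $< n+1$) gives $\ell(A_G \cap B_G) \geq (n-k)-1$ and collinearity of $A_G \triangle B_G$; since $A_G \cap B_G = \emptyset$, we conclude $k \geq n-1$, and $(A\setminus B)\cup(B\setminus A) = A_G \triangle B_G$ is collinear. The most delicate step is verifying non-redundancy of $B_G$ for $G$, which relies essentially on \autoref{cor: uniqueness for LI decomp}; the Hilbert-function bookkeeping in the base case $k = 0$ is the other substantial ingredient.
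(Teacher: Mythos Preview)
Your proof is correct and takes a genuinely different route from the paper's.

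The paper proceeds by a single induction on $n$ that carries the conclusion $d=3$ along: in the base case $n=1$, five points on a rational normal curve of degree $d\geq 4$ are independent, forcing $d=3$; in the inductive step, \emph{one} common point $[x_n]\in A\cap B$ is removed, and a case split on whether its $B$-coefficient equals $1$ reduces to the same lemma in one dimension lower. Your argument instead separates the two conclusions. You first force $d=3$ by a direct linear-algebra computation with $(d-2)$-th partial derivatives; the dichotomy on $\dim V_2$ is neat and avoids any Hilbert-function machinery for this part. Only then do you run the geometric induction, subtracting \emph{all} common points at once. This buys a sharper descent (you drop by $k$ dimensions rather than $1$), at the cost of the extra bookkeeping with $I_0^*$ and the separate non-redundancy verification for $B_G$, both of which the paper's one-point-at-a-time reduction sidesteps. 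Your derivative argument for $d=3$ is arguably more transparent than extracting it from the induction.

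Two minor points worth tightening. First, the opening sentence ``By \autoref{cor: uniqueness for LI decomp}, the decomposition $A$ is linearly independent'' misattributes: that corollary \emph{assumes} linear independence. What you actually need is that conciseness of $F$ forces $\langle A\rangle=\bbP V$, and then $|A|=n+1=\dim V$ gives independence; \autoref{cor: uniqueness for LI decomp} then applies to yield $\rmR(F)=n+1$ and uniqueness. Second, before invoking the inductive hypothesis in $V'$ you should check $B_G\subseteq\bbP V'$: since $G$ is concise in $V'$ one has $V'\subseteq\langle B_G\rangle$, and $\dim\langle B_G\rangle\leq|B_G|=\dim V'+1$; equality is impossible, for then $B_G$ would be $\dim\langle B_G\rangle$ independent points and \autoref{cor: uniqueness for LI decomp} would make $G$ concise in the larger span. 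This also handles the edge case $k=n$ (where $\dim V'=1$), which your write-up passes over.
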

\begin{proof}
We proceed by induction on $n$. If $n = 1$ the statement is clearly true. Since $5$ points on a rational normal curve of degree $d \geq 4$ are linearly independent, we immediately have $d=3$. In this case, every concise $F \in S^3 V$ has at most one decomposition of length $2$ and a two-parameter family of decompositions of length $3$. All decompositions are collinear because $\bbP V$ is a line.

Assume $n \geq 2$. First assume $A \cap B = \emptyset$. Let $Z = A \cup B$. Since $F$ is concise, we have $Dh_A = (1,n)$, which implies $Dh_Z(0) + Dh_Z(1) = n+1$; by the Cayley-Bacharach property of $Z$, we have $Dh_Z(d+1) + Dh_Z(d) \geq n+1$. This implies $Dh_Z(2) \leq 1$, which arithmetically is a contradiction by \autoref{prop: basics HF}.
 
 Therefore $A \cap B \neq \emptyset$. Without loss of generality, assume $A = \{ [x_0] \vvirg [x_n]\}$ and $F  = x_0^d + \cdots + x_n^d$. Since $A \cap B \neq \emptyset$, assume $[x_n] \in B$ and write $B = \{ [L_0] \vvirg [L_n],[x_n]\}$, so that $F = L_0^d + \cdots + L_n^d + \beta x_n^d$ for some nonzero $\beta \in \bbC$. Consider 
 \begin{equation}\label{eqn: two decomps F'}
  F' = F - \beta x_n^d = x_0^d + \cdots + x_{n-1}^d + (1-\beta) x_n^d = L_0^d + \cdots + L_n^d.
 \end{equation}
Let $B' = \{ [L_0] \vvirg [L_n]\}$; clearly $[x_n] \notin B'$. If $\beta \neq 1$, then $F'$ is concise and \eqref{eqn: two decomps F'} gives two non-redundant decompositions $F'$ of length $n+1$. By \autoref{cor: uniqueness for LI decomp}, the two decompositions are the same, in contradiction with the fact that $[x_n] \notin B'$. Therefore $\beta = 1$ and $F'$ is non-concise. Moreover, if $B'$ was linearly independent in $\bbP V$, \autoref{cor: uniqueness for LI decomp} would imply that $F'$ is concise; hence $B'$ is not linearly independent. 

Let $V' = \langle x_0 \vvirg x_{n-1} \rangle$. Notice $V' = \langle B'\rangle$ and $F'$ is concise in $S^d V'$. Let $A' = \{ [x_0] \vvirg [x_{n-1}]\}$. Now $F'$ is concise and it has two decompositions $A',B'$ of length $n$ and $n+1$ respectively. By the inductive hypothesis, we deduce $d = 3$, $\ell(A' \cap B') \geq n-2$ and $B'$ has three collinear points. Since $A = A' \cup \{ [x_n]\}$ and similarly for $B$, we deduce that the statement holds for $A$ and $B$ as well. This concludes the proof.
\end{proof}

In the study of identifiability of forms, it is often useful to consider the case where two potential decompositions of a form are disjoint. We illustrate a general method to reduce the case of non-disjoint decompositions to the one of disjoint decompositions: 
\begin{remark}\label{rmk: pass to disjoint}
Let $F \in S^d V$ and suppose $A,B \subseteq \bbP V$ give two non-redundant decompositions of $F$. Write $A = \{ [L_1] \vvirg [L_r]\}$ and $B = \{ [M_1] \vvirg [M_s]\}$. Then 
\[
 F = \alpha_1 L_1^d + \cdots + \alpha_r L_r^d = \beta_1 M_1^d + \cdots + \beta_s M_s^d.
\]
After possibly reordering the elements of $A$ and $B$, assume $L_i = M_i$ for $i = 1 \vvirg p$, where $ p = \ell(A\cap B)$. In this case, define 
\[
 F' = F - (\alpha_1 L_1^d + \cdots + \alpha_p L_p^d) = (\beta_1 - \alpha_1)M_1^d + \cdots +  (\beta_p - \alpha_p)M_p^d + \beta_{p+1} M_{p+1} + \cdots + \beta_s M_s^d;
\]
note
\[
 F ' = \alpha_{p+1} L_{p+1}^d + \cdots + \alpha_r L_r^d.
\]
Let $A' = A \setminus B$; let $B' = \{ [M_i] : \alpha _i \neq \beta_i , i = 1 \vvirg p\} \cup \{[M_{p+1}] \vvirg [M_s]\}$; then $A',B'$ are two disjoint non-redundant decompositions of $F'$. Moreover, if $A$ is a minimal decomposition for $F$, then $A'$ is a minimal decomposition for $F'$.
\end{remark}

We conclude this section with a classical result for which we do not know an explicit reference, in the form that we need in the following. In the case $n=1$, the result dates back to \cite{Sylv:PrinciplesCalculusForms}; analogous statements, in a more general setting, are discussed in \cite[Section 7.1]{BalBerChrGes:PartiallySymRkW}.

\begin{proposition}\label{prop: sylvester's d+2} Let $F$ be a concise form in $S^d V$, with $\dim V = n+1$. Let $A,B$ be non-redundant decompositions of $F$ of length $r$ and $s$ respectively. Then $r+s \geq d+2n$. 
\end{proposition}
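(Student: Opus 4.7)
The plan is to handle first the disjoint case $A \cap B = \emptyset$ via Cayley--Bacharach, then to reduce the generic non-disjoint case to it using Remark~\ref{rmk: pass to disjoint}, and finally to treat the containment edge case $A \subsetneq B$ separately via a minimal linear dependence inside $B$. Throughout I work in the regime $d \geq 2$ where the bound is non-trivial.

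In the disjoint case I set $Z = A \cup B$, so $\ell(Z) = r+s$. By Proposition~\ref{prop: CB for nonredundant}, $Z$ satisfies $\CB(d)$; conciseness of $F$ forces $Z$ to span $\bbP V$, so $Dh_Z(0) = 1$ and $Dh_Z(1) = n$. Theorem~\ref{thm: CB implies HF} with $s = 1$ gives $Dh_Z(d) + Dh_Z(d+1) \geq n+1$; letting $\tau$ be the regularity from Proposition~\ref{prop: basics HF}(ii), this forces $\tau \geq d$, so $Dh_Z(t) \geq 1$ for $2 \leq t \leq d-1$. Splitting
\[
r + s = \sum_t Dh_Z(t) \geq Dh_Z(0) + Dh_Z(1) + \sum_{t=2}^{d-1} Dh_Z(t) + \bigl(Dh_Z(d) + Dh_Z(d+1)\bigr)
\]
and bounding each block below yields $r + s \geq 1 + n + (d-2) + (n+1) = d + 2n$.

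For the generic non-disjoint case with $A \not\subseteq B$ and $B \not\subseteq A$, I invoke Remark~\ref{rmk: pass to disjoint} to obtain a non-zero $F' \in S^d V$ with non-empty disjoint non-redundant decompositions $A'$, $B'$. Let $V' \subseteq V$ be the minimal subspace with $F' \in S^d V'$, $\dim V' = n' + 1$; the disjoint case applied to $F'$ in $V'$ gives $\ell(A') + \ell(B') \geq d + 2n'$. Writing $p = \ell(A \cap B)$ and $q$ for the number of shared points whose coefficients agree in both decompositions, Remark~\ref{rmk: pass to disjoint} directly yields $r + s = \ell(A') + \ell(B') + p + q$. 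Since $(A \cup B) \setminus (A' \cup B')$ consists precisely of the $q$ matched points, they must extend the span $\langle A' \cup B' \rangle = \bbP V'$ by the $n - n'$ missing dimensions of $\langle A \cup B \rangle = \bbP V$ (the latter forced by conciseness of $F$), so $q \geq n - n'$; combined with the trivial $p \geq q$ this gives $p + q \geq 2(n-n')$, hence $r + s \geq d + 2n$.

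The hard part is the containment edge case $A \subsetneq B$, where Remark~\ref{rmk: pass to disjoint} produces $F' = 0$ and does not apply directly. Here the plan is to subtract the two expressions of $F$ to obtain a non-trivial linear relation $\sum_{L \in B^*} c_L L^d = 0$ with $B^* \subseteq B$ and all $c_L \neq 0$; refining to a minimal such relation, its support $B'' \subseteq B^*$ satisfies $\CB(d)$ in its linear span $\bbP V''$ (with $\dim V'' = n''+1$ and $n'' \geq 1$), because in a minimal linear dependence every $L^d$ lies in the span of the others, so no evaluation condition at a single point is lost upon removing it. Applying Theorem~\ref{thm: CB implies HF} inside $\bbP V''$ exactly as in the disjoint case yields $\ell(B'') \geq d + 2n''$; since $B$ spans $\bbP V$, the complement satisfies $\ell(B \setminus B'') \geq n - n''$, so $\ell(B) \geq d + n + n''$, and adding the conciseness bound $\ell(A) \geq n+1$ gives the stronger conclusion $r + s \geq d + 2n + 1 + n'' > d + 2n$. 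The main subtleties to verify are that $B''$ indeed satisfies $\CB(d)$, and that the spanning count in the generic case correctly tracks the conciseness of $F$ versus that of $F'$ in the smaller space $V'$.
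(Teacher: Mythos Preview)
Your approach is the paper's: reduce to disjoint decompositions via Remark~\ref{rmk: pass to disjoint}, then run the Cayley--Bacharach h-vector count. The disjoint case is handled correctly and is essentially identical to the paper's computation.

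There is one genuine slip in your non-disjoint reduction. You assert $\langle A' \cup B' \rangle = \bbP V'$, where $V'$ is the conciseness space of $F'$; but the points of $A' = A \setminus B$ live in $\bbP V$ and need not lie in $\bbP V'$ --- a non-redundant decomposition of a non-concise form need not sit inside its conciseness subspace. One only has $\langle A' \cup B' \rangle \supseteq \bbP V'$. Consequently the spanning argument yields only $q \geq n - m$ with $m := \dim \langle A' \cup B' \rangle \geq n'$, which does \emph{not} imply your claimed $q \geq n - n'$. The repair is exactly what the paper does: take $n' := \dim \langle A' \cup B' \rangle$ from the outset (rather than the conciseness dimension of $F'$). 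Then $Dh_{Z'}(1) = n'$ feeds directly into the h-vector count to give $\ell(A') + \ell(B') \geq d + 2n'$, the spanning argument gives $q \geq n - n'$ on the nose, and with $p \geq q$ you finish exactly as written.

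Your containment edge case is unnecessary: if $A \subsetneq B$ with $B$ non-redundant, the $d$-th powers of the elements of $B$ are linearly independent, and subtracting the two expressions of $F$ produces a nontrivial dependence among them (the coefficients on $B \setminus A$ are all nonzero), a contradiction. So this case is vacuous. (Both your argument and the paper's tacitly exclude $A = B$; the bound is false there, as the Fermat cubic $x_0^3 + \cdots + x_n^3$ with $r = s = n+1$ shows.)
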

\begin{proof} Let $p$ be the length of $A\cap B$. If $p=0$, set $F'=F$. If $p>0$, consider the form $F'$ with disjoint decompositions $A',B'$ of length $r',s'$ respectively, as constructed in \autoref{rmk: pass to disjoint}. Notice that  $r-r'=p\geq s-s'$; specifically $s-s'$ is the number of elements of $A \cap B$ appearing with the same coefficient in the two decompositions of $F$. Observe $\dim \langle B' \rangle \geq n - (s-s')$: indeed $F$ is concise, which implies that $B$ spans $\bbP V$; since $\dim \langle B \setminus B' \rangle \leq s-s'-1$, we deduce $\dim \langle B' \rangle \geq n - (s-s')$. 

Set $Z = A'\cup B'$ and let $n' = \dim \langle Z' \rangle$. We have $n' \geq \dim \langle B' \rangle \geq n-(s-s')$, so $n-n' \leq s-s' \leq r-r'$. Since $Z$ is union of two disjoint decompositions of a form of degree $d$, we have $Dh_{Z'}(d+1) \geq 1$, which implies $Dh_{Z'}(t) \geq 1$ for $t \leq d+1$. Moreover, $Dh_{Z'}(1) = \dim \langle Z' \rangle = n'$. Finally, \autoref{prop: CB for nonredundant} implies that the Cayley-Bacharach property holds for $Z'$ in degree $d$, and \autoref{thm: CB implies HF} implies $Dh_{Z'}(d+1) + Dh_{Z'}(d) \geq Dh_{Z'}(0) + Dh_{Z'}(1) \geq 1+n'$.

We deduce $Dh_{Z'} = (1,n', \delta_2 \vvirg \delta_{d+1}, \ldots)$ with $\delta_t \geq 1$ for $t \leq d+1$ and $\delta_{d+1} + \delta_d \geq 1+n'$. We conclude $r' + s' = \ell(Z') \geq \sum_{t=0}^{d+1} \delta_t \geq 2+2n' + (d-2) = d + 2n'$. Therefore
\[
r+s=(r-r')+(s-s')+(r'+s')\geq (n-n')+(n-n')+d+2n' = d + 2n 
\]
as desired.
\end{proof}

The bound of \autoref{prop: sylvester's d+2} is sharp. An example where the bound is attained can be constructed, for every $n$ and $d$, as follows. When $d=2$, it is a classical fact \cite{Terr:seganti} that general forms of maximal rank have many decompositions of length $n+1$, which attains the bound. 

For higher values of $d$, write $V = V' \oplus V''$ where with $\dim V' = 2$, $\dim V' = n-1$. Let let $F' \in S^d V'$ be a binary form having two non-redundant disjoint decompositions $A',B'$ of length $r',s'$ with $r'+s'=d+2$; it is a classical result that such forms exist \cite{Sylv:PrinciplesCalculusForms,ComaSigu:RankBinaryForms}. Let $F'' = L_1^d+\dots +L_{n-1}^d$ be a sum of $n-1$ $d$-powers of generic linear forms. Then $F = F' + F''$ is concise and it has two distinct non-redundant decompositions $A=A'\cup\{L_1,\dots,L_{n-1}\}$ and $B=B'\cup   \{L_1,\dots,L_{n-1}\}$ with $\ell(A) = r' + (n-1)$, $\ell(B) = s' + (n-1)$ and we have $\ell(A) + \ell(B) = r'+s'+2(n-1) = d+2n$.

\section{First cases}
In this section, we characterize decompositions of length $n+2$ in $\bbP^n$ for $n =1,2$.
\subsection*{Case $n=1$} ~ 

This case is classical, see e.g. \cite{ComaSigu:RankBinaryForms,BerGimIda:ComputingSymmetricRankSymmetricTensors}. Let $V$ be a vector space with $\dim V= 2$. The generic Waring rank in $\bbP S^3 V$ is $2$. In this case \autoref{thm: main theorem} is easily verified. Let $F \in S^3 V$ be a concise form. If $\rmR(F) = 2$, then $F$ has a unique decomposition of length $2$ by Kruskal criterion, and in particular \autoref{cor: uniqueness for LI decomp}; moreover $F$ has a $2$-dimensional family of decompositions of length $3$. If $\rmR(F) = 3$, then $[F]$ is an element of $\tau(v_3(\bbP^1)) \setminus v_3(\bbP^1)$, where $\tau(v_3(\bbP^1)) = \{ [L_1^2 L_2] \in \bbP S^3 V: [L_1],[L_2] \in \bbP^1\}$; in this case $F$ has a two dimensional family of decompositions of length $3$. 

Since $\bbP V= \bbP^1$, all decompositions are defined by collinear points. In particular, both case (II) and case (III) of \autoref{thm: main theorem} trivially hold.
\medskip

\subsection*{Case $n=2$}~ 

Let $V$ be a vector space with $\dim V= 3$. The generic Waring rank in $\bbP S^3 V$ is $4$. Let $F \in S^3 V$ be a concise form. It is known that $3 \leq \rmR(F) \leq 5$ \cite{LanTei:RanksBorderRanksSymTensors} and we are interested in the dense subset of forms satisfying $\rmR(F) \leq 4$. Notice $\dim A\sigma_4(v_3(\bbP^2)) = 12$. Therefore the projection map $A\sigma_4(v_3(\bbP^2)) \to \bbP S^3 V$ has generic fibers of dimension $2$. This guarantees that if $\rmR(F) \leq 4$, then $F$ has at least a $2$-dimensional family of decompositions of length $4$.

In particular, case (III) of \autoref{thm: main theorem} trivially holds. We provide an explicit geometric characterization of the decompositions, which implicitly appears in the proof of \cite[Thm. 5]{BerGimIda:ComputingSymmetricRankSymmetricTensors}.
\begin{theorem}\label{thm: n3}
 Let $F \in S^3 V$ be a concise form with $\dim V = 3$. Let $A,B$ be non-redundant decompositions of $F$, with $\ell(A) = \ell(B) = 4$. Then one of the following holds:
 \begin{itemize}
  \item $A \cap B = \emptyset$ and there is a conic passing through $A \cup B$; if $A$ has three collinear points, so does $B$.
  \item $\ell(A \cap B) \geq 1$ and all non-redundant decompositions of $F$ of length $4$ have three collinear points, on a line $\Lambda$.
 \end{itemize}
\end{theorem}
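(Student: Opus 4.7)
The plan splits by whether $A \cap B$ is empty or not.

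\emph{Disjoint case.} I would set $Z = A \cup B$, so $|Z|=8$, and invoke \autoref{prop: CB for nonredundant} to obtain that $Z$ satisfies $\CB(3)$. Conciseness of $F$ yields $Dh_Z(1)=2$. Combining the ambient bounds in $\bbP^2$, the decay property from \autoref{prop: basics HF}, the inequality $Dh_Z(3)+Dh_Z(4) \geq 1 + Dh_Z(1) = 3$ from \autoref{thm: CB implies HF}, and the total $\sum_t Dh_Z(t)=8$, a short enumeration forces $Dh_Z=(1,2,2,2,1)$; in particular $\dim I(Z)_2 = 1$, so there is a unique conic $C$ through $Z$. If $A$ has three collinear points on a line $\Lambda$, then $\Lambda \subseteq C$ by B\'ezout, so $C = \Lambda \cup \Lambda'$; conciseness forces $A$ to have distribution $(3,1)$ on $(\Lambda,\Lambda')$, leaving $(3,1),(2,2),(1,3)$ as candidates for $B$. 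To rule out $(2,2)$, I would pick $p \in B \cap \Lambda'$: cubics through $Z\setminus\{p\}$ pass through $5$ collinear points of $\Lambda$ and hence all contain $\Lambda$, forming a $4$-parameter family $\Lambda\cdot Q$; cubics through $Z$ form only a $3$-parameter family. The jump in $\dim I_3$ violates $\CB(3)$.

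\emph{Intersecting case.} I would apply \autoref{rmk: pass to disjoint} to obtain disjoint non-redundant decompositions $A',B'$ of a form $F'\neq 0$ with $\ell(A')=4-p$, where $p=\ell(A \cap B) \geq 1$. The case $p=3$ is immediate: $F'$ has rank $1$ and hence admits a unique length-$1$ decomposition (any four distinct points on the twisted cubic in $\bbP^3$ are linearly independent), contradicting disjointness. For $p=1$ (so $\ell(A')=3$), if $A'$ were linearly independent, \autoref{cor: uniqueness for LI decomp} would give $\rmR(F')=3$ with $A'$ its unique minimal decomposition, so $\ell(B')=3$ would contradict disjointness while $\ell(B')=4$ would contradict \autoref{lemma: minimal plus one for fermat} (which requires $\ell(A'\cap B') \geq 1$). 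Hence $A'$ is collinear, placing three points of $A$ on a line $\Lambda$, and $F' \in S^3 W$ where $W$ is the $2$-dimensional subspace with $\Lambda = \bbP W$. A transverse-derivative argument (using that $\phi \cdot F' = 0$ for $\phi \in W^\perp$ together with the linear independence of squares of distinct linear forms) pins every decomposition of $F'$ to $\bbP W$; hence $B\setminus A \subseteq \Lambda$, and then conciseness of $F$ forces the shared point of $A\cap B$ onto $\Lambda$ too, so $B$ has three collinear points on the same $\Lambda$. For $p=2$, $F'=\alpha_3 L_3^3 + \alpha_4 L_4^3$ is a rank-$2$ binary cubic on $\Lambda = \langle L_3,L_4\rangle$ and the same rigidity pins $B'$ to $\Lambda$; analyzing the six-term linear relation derived from equating the two expressions for $F$, and using that four points on a line in $\bbP^2$ together with two points off the line impose independent conditions on cubics, forces at least one common point of $A\cap B$ onto $\Lambda$. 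Any further decomposition $C$ is handled by applying the same dichotomy to $(A,C)$.

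\emph{Main obstacle.} The disjoint case is essentially a Hilbert-function enumeration. The intersecting case is harder: the key rigidity---that a form in $S^3 W$ admits only decompositions supported in $\bbP W$---is what makes the line $\Lambda$ common to all decompositions, and the subcase $p=2$ requires a precise Hilbert-function computation on a six-point configuration (four on a line, two off it).
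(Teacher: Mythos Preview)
Your approach matches the paper's: split into the disjoint and the intersecting case; for the disjoint case, compute $Dh_Z=(1,2,2,2,1)$ via the Cayley--Bacharach property to obtain the unique conic; for the intersecting case, reduce via \autoref{rmk: pass to disjoint} to a form $F'$ lying on a line $\Lambda$. Your transverse-derivative argument (that every non-redundant decomposition of a form in $S^3W$ lies in $\bbP W$, using independence of the squares of distinct linear forms) is exactly the step the paper uses implicitly when it asserts $B'\subseteq\Lambda$.

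Two small slips to fix. In the $p=1$ case you write that conciseness forces the shared point of $A\cap B$ \emph{onto} $\Lambda$; in fact conciseness forces it \emph{off} $\Lambda$ (otherwise $B\subseteq\Lambda$). This does not affect the conclusion, since $B\setminus A\subseteq\Lambda$ with $\ell(B\setminus A)=3$ already places three points of $B$ on $\Lambda$. In the $p=3$ case, your twisted-cubic parenthetical only applies once $B'$ is known to be collinear with $[L]$; the correct justification is again the derivative argument (squares of at most three distinct linear forms are independent, while $\ell(B')=4$ is excluded by \autoref{lemma: minimal plus one concise}), which forces $B'=\{[L]\}=A'$.

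One minor difference in execution: in the disjoint case with three points of $A$ on $\Lambda$, you rule out the $(2,2)$ distribution for $B$ by a direct $\CB(3)$ dimension count comparing $I(Z\setminus\{p\})_3$ with $I(Z)_3$. The paper instead observes that $h_Z(3)=\ell(Z)-1$ forces $\langle v_3(A)\rangle\cap\langle v_3(B)\rangle=\{[F]\}$, and then derives a contradiction from five collinear points on $\Lambda$ producing a nontrivial intersection inside $\langle v_3(\Lambda)\rangle$. The paper's route also excludes the $(3,1)$ distribution and pins the three collinear points of $B$ to the \emph{other} component $\Lambda'$, which is slightly sharper than what you prove, though your version suffices for the theorem as stated. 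For $p=2$, your six-term-relation argument is a valid alternative; the paper reaches the same conclusion more directly by showing $\ell(B')=3$ and $B'\subseteq\Lambda$, which immediately places one of the two shared points on $\Lambda$.
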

\begin{proof}
 The statement follows from \autoref{prop: unique conic} and \autoref{prop: three collinear on plane} below. \autoref{prop: unique conic} analyzes the case where $A$ and $B$ are disjoint and \autoref{prop: three collinear on plane} the one where they are not.
\end{proof}

\begin{proposition}\label{prop: unique conic}
 Let $F \in S^3 V$ be a concise form with $\dim V = 3$. Let $A,B$ be non-redundant decompositions of $F$, with $\ell(A) = \ell(B) = 4$. If $A$ and $B$ are disjoint then there is a unique conic $C$ vanishing on $A \cup B$. Moreover, if $\Lambda$ is a line with $\ell(A \cap \Lambda) \geq 3$, then $C = \Lambda \cup \Lambda'$ for a line $\Lambda'$ and $\ell(B \cap \Lambda') =3$.
\end{proposition}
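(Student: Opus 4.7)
\emph{Plan of proof.} The proof splits naturally into two parts: (i) existence and uniqueness of the conic $C$, and (ii) the structural statement when three points of $A$ lie on a line $\Lambda$.

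For (i), let $Z = A \cup B$. Since $A$ and $B$ are disjoint, $\ell(Z) = 8$; conciseness of $F$ gives $\langle A \rangle = \bbP V$, so $Dh_Z(0) = 1$ and $Dh_Z(1) = 2$. By \autoref{prop: CB for nonredundant}, $Z$ satisfies $\CB(3)$, and \autoref{thm: CB implies HF} then yields $Dh_Z(4) \geq 1$ and $Dh_Z(3) + Dh_Z(4) \geq 3$. Combining these with $\sum_t Dh_Z(t) = 8$ and the monotonicity of \autoref{prop: basics HF}(iii), a short case analysis forces the h-vector of $Z$ to equal $(1, 2, 2, 2, 1)$. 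In particular $\dim I(Z)_2 = 6 - h_Z(2) = 1$, giving the unique conic $C$ through $Z$.

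For (ii), suppose $\Lambda$ is a line with $\ell(A \cap \Lambda) \geq 3$. Since $A$ spans $\bbP V$, exactly three points of $A$ lie on $\Lambda$ and the fourth lies off it. The conic $C$ contains these three collinear points; by Bezout, a conic that does not contain a given line meets it in at most two points, so $\Lambda$ must be a component of $C$. Write $C = \Lambda \cup \Lambda'$; the fourth point of $A$ then lies on $\Lambda'$.

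To pin down $\ell(B \cap \Lambda')$, note that $B \subseteq Z \subseteq C$, so each point of $B$ lies on $\Lambda \cup \Lambda'$; non-redundancy of $B$ in the concise form $F$ forces $\langle B \rangle = \bbP V$, giving $1 \leq \ell(B \cap \Lambda) \leq 3$ and $1 \leq \ell(B \cap \Lambda') \leq 3$. I apply $\CB(3)$ at a point $p \in B \cap \Lambda'$ with $p \neq \Lambda \cap \Lambda'$: the four or more points of $Z \setminus \{p\}$ lying on $\Lambda$ force any cubic in $I(Z \setminus \{p\})_3$ to contain $\Lambda$, hence to factor as $\Lambda \cdot Q$ where the conic $Q$ passes through the points of $Z \cap \Lambda'$ that lie off $\Lambda$ and are different from $p$. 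Comparing $\dim I(Z \setminus \{p\})_3$ with $\dim I(Z)_3 = 3$ (read off the h-vector) forces $Q$ to contain $\Lambda'$ as well, which occurs iff at least three points of $Z$ lie on $\Lambda' \setminus \Lambda$ off $p$; this inequality translates into $\ell(B \cap \Lambda') \geq 3$, yielding equality.

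The main technical obstacle is the bookkeeping when $p_0 := \Lambda \cap \Lambda'$ lies in $Z$, since then $p_0$ is counted on both components and the point counts on $\Lambda' \setminus \Lambda$ shift by one. I plan to handle this by a case split on whether $p_0 \in A$, $p_0 \in B$, or $p_0 \notin Z$; the case $p_0 \in B$ turns out to be ruled out entirely by the $\CB(3)$ comparison together with the spanning bounds, and the remaining two cases yield $\ell(B \cap \Lambda') = 3$ directly.
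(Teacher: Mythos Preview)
Your argument for part (i) is correct and essentially identical to the paper's: both compute the h-vector of $Z=A\cup B$ to be $(1,2,2,2,1)$ via the Cayley--Bacharach inequality and the growth constraints of \autoref{prop: basics HF}, then read off $\dim I(Z)_2=1$. The only cosmetic difference is that you extract $Dh_Z(4)\geq 1$ from \autoref{thm: CB implies HF} with $s=0$, whereas the paper obtains it from the linear dependence of $v_3(Z)$ coming from $[F]\in\langle v_3(A)\rangle\cap\langle v_3(B)\rangle$.

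For part (ii) your route genuinely differs from the paper's. You argue via a direct $\CB(3)$ dimension count: after factoring every cubic in $I(Z\setminus\{p\})_3$ as $\Lambda\cdot Q$, the equality $\dim I(Z\setminus\{p\})_3=\dim I(Z)_3=3$ forces at least three of the residual collinear points on $\Lambda'$ to survive, which (after the case split on $p_0=\Lambda\cap\Lambda'$) yields $\ell(B\cap\Lambda')=3$ and indeed excludes $p_0\in B$. This works, but the bookkeeping with $p_0$ is exactly the nuisance you anticipate. The paper avoids all of it with a cleaner linear-algebra observation: since $h_Z(3)=7=\ell(Z)-1$, one has $\langle v_3(A)\rangle\cap\langle v_3(B)\rangle=\{[F]\}$; if $\ell(B\cap\Lambda')<3$ then $\ell(Z\cap\Lambda)\geq 5$, so $v_3(Z\cap\Lambda)$ is linearly dependent on the twisted cubic $v_3(\Lambda)\subseteq\bbP^3$, producing a point of $\langle v_3(A\cap\Lambda)\rangle\cap\langle v_3(B\cap\Lambda)\rangle\subseteq\langle v_3(\Lambda)\rangle$ that would have to equal $[F]$, contradicting conciseness. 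This bypasses the component-by-component case analysis entirely; your approach, on the other hand, has the minor bonus of squeezing out the extra information that $p_0\notin B$.
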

\begin{proof}
Since $F$ is concise, there are no linear forms vanishing on $A$. Therefore the h-vector of $A$ is $Dh_A = (1,2,1)$ and there is a pencil of conics vanishing on $A$.

Let $Z = A \cup B$; since $A \cap B =\emptyset$, $\ell(Z) = 8$. Since $[F] \in \linspan {v_3(A)}\cap \linspan {v_3(B)}$, we have that $v_3(Z)$ is not linearly independent, namely $\dim \linspan{v_3(Z)} < \ell(Z) - 1 = 7$. In particular, $Dh_Z(4)>0$. Since $Dh_A = (1,2,1)$, we have $Dh_Z(0) + Dh_Z(1) \geq 3$. Since $A,B$ are non-redundant, they satisfy the Cayley-Bacharach property in degree $3$ by \autoref{prop: CB for nonredundant}: therefore \autoref{thm: CB implies HF} provides $Dh_Z(3)+Dh_Z(4)\geq 3$, so $Dh_Z(2) \leq 2$. If $Dh_Z(2) \leq 1$, by \autoref{prop: basics HF}, we have $Dh_Z(i) \leq 1$ for $i\geq 2$, which provides a contradiction; therefore $Dh_Z(2) = 2$. We obtain $Dh_Z = (1,2,2,2,1)$. In particular, $h_Z(2) = 1+2+2 = 5$, which implies that there is a unique conic vanishing on $Z$.

Now suppose $A$ contains three collinear points, lying on a line $\Lambda \subseteq \bbP V$. Since $A \subseteq C$, we deduce that the conic $C$ passing through $Z$ is reducible with $C = \Lambda \cup \Lambda'$ for some other line $\Lambda' \subseteq \bbP V$. We are going to show that $B \cap \Lambda'$ consists of three points. Suppose by contradiction $B \cap \Lambda'$ contains fewer than three points; since $B \subseteq C$, we deduce that $B \cap \Lambda$ contains at least two points. Notice $h_Z(3) = 1+2+2 +2 = 7 = \ell(Z)-1$: this guarantees $ \dim ( \linspan {v_3(A)} \cap \linspan{v_3(B)}) = 0$ therefore the two linear spans only intersect at $[F]$. However, 
\[
\ell(\Lambda \cap Z) = \ell(\Lambda \cap A)  + \ell(\Lambda \cap B) \geq  3+2 = 5;
\]
therefore $v_3(\Lambda \cap Z)$ is linearly dependent. This implies $ \linspan {v_3(A \cap \Lambda )} \cap \linspan{v_3(B \cap \Lambda)} \neq \emptyset$, in contradiction with the fact that the two spans only intersect at $[F]$, which is not an element of $\linspan{v_3(\Lambda)}$ because $F$ is concise. This shows that $B \cap \Lambda'$ contains three points.
\end{proof}

The case where $A$ and $B$ are not disjoint is characterized in the following result.
\begin{proposition}\label{prop: three collinear on plane}
 Let $F \in S^3 V$ be a concise form with $\dim V = 3$. Let $A,B$ be non-redundant decompositions of $F$, with $\ell(A) = \ell(B) = 4$. If $\ell(A \cap B) \geq 1$, then there is a line $\Lambda \subseteq \bbP V$ such that $A \setminus \Lambda = B \setminus \Lambda$ consists of exactly one point. In particular, both $A$ and $B$ have three collinear points on $\Lambda$.
 \end{proposition}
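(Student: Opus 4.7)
My approach is to pass from $A$ and $B$ to disjoint non-redundant decompositions via Remark~\ref{rmk: pass to disjoint}, apply the Cayley--Bacharach constraints of Proposition~\ref{prop: CB for nonredundant} and Theorem~\ref{thm: CB implies HF} to force them onto a single line, and then read off the line $\Lambda$ from the conciseness of $F$. The statement is non-trivial only when $A \neq B$, so I may assume this.

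Set $p := \ell(A \cap B) \in \{1, 2, 3\}$, write $A \cap B = \{P_1, \dots, P_p\}$ with coefficients $\alpha_i$ in $A$ and $\beta_i$ in $B$, and put $q = |\{i : \alpha_i \neq \beta_i\}|$. Remark~\ref{rmk: pass to disjoint} produces disjoint non-redundant decompositions $A' = A \setminus B$ and $B' = (B \setminus A) \cup \{P_i : \alpha_i \neq \beta_i\}$ of the nonzero form $F' := F - \sum_{i=1}^p \alpha_i P_i^3$, with $\ell(A') = 4 - p$ and $\ell(B') = q + 4 - p$. By Proposition~\ref{prop: CB for nonredundant}, the set $Z' := A' \cup B'$ satisfies the Cayley--Bacharach property in degree~$3$; Theorem~\ref{thm: CB implies HF} applied with $t = 3$ and $s \in \{0, 1\}$ then yields
\[
Dh_{Z'}(4) \;\geq\; 1 \qquad\text{and}\qquad Dh_{Z'}(3) + Dh_{Z'}(4) \;\geq\; Dh_{Z'}(0) + Dh_{Z'}(1).
\]

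The arithmetic heart of the argument is that these inequalities, together with $|Z'| = 8 - 2p + q \leq 7$ and Proposition~\ref{prop: basics HF}, force $Z'$ to lie on a single line $\ell \subseteq \bbP V$. The case $|Z'| \leq 4$ is eliminated because four distinct points of $\bbP V$ always impose four independent conditions on cubics, forcing $Dh_{Z'}(4) = 0$; and for $|Z'| \in \{5, 6, 7\}$ the combination of the two inequalities above with parts (ii)--(iii) of Proposition~\ref{prop: basics HF} leaves no Hilbert function compatible with $Dh_{Z'}(1) = 2$. Hence $Dh_{Z'}(1) = 1$, i.e., $Z' \subseteq \ell$.

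With $Z' \subseteq \ell$ in hand, I conclude by inspection of $(p, q)$. The cases $(p, q) = (2, 0)$ and $(p, q)$ with $p = 3$ and $q \leq 2$ have $|Z'| \leq 4$ and are already ruled out. The cases $(p, q) \in \{(1, 1), (2, 2), (3, 3)\}$ place every point of $A \cap B$ inside $Z' \subseteq \ell$, and therefore put all of $A$ on $\ell$, contradicting the conciseness of $F$. Only $(p, q) \in \{(1, 0), (2, 1)\}$ survive; in each, there is a unique point $P \in A \cap B$ with matching coefficient in $A$ and $B$ (so $P \notin B'$), and conciseness of $F$ forces $P \notin \ell$. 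Setting $\Lambda := \ell$ then yields $A \setminus \Lambda = B \setminus \Lambda = \{P\}$, as required.

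The main obstacle I anticipate is the arithmetic check that the Cayley--Bacharach inequalities admit no non-collinear solution on at most seven points of $\bbP V$; once this is in hand, the remainder of the argument is mechanical bookkeeping.
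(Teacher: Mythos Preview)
Your proof is correct and takes a genuinely different route from the paper's argument. Both proofs begin by applying Remark~\ref{rmk: pass to disjoint} to obtain disjoint non-redundant decompositions $A',B'$ of a form $F'$, but diverge from there. The paper argues structurally: if $A'$ spans a plane then Lemma~\ref{cor: uniqueness for LI decomp} forces $A'=B'$, so $A'$ lies on a line $\Lambda$; it then uses Proposition~\ref{prop: sylvester's d+2} to exclude the possibility that $F'$ has rank one, and finishes by a short case analysis on $\ell(A')\in\{2,3\}$. You instead apply the Cayley--Bacharach machinery directly to $Z'=A'\cup B'$ and run an arithmetic elimination on the h-vector: with $|Z'|\leq 7$ and $Dh_{Z'}(1)=2$, the constraints $Dh_{Z'}(4)\geq 1$ and $Dh_{Z'}(3)+Dh_{Z'}(4)\geq 3$ collide with Proposition~\ref{prop: basics HF}(iii), forcing $Dh_{Z'}(1)=1$, hence collinearity of the whole of $Z'$ at once. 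Your $(p,q)$ bookkeeping then cleanly isolates the two surviving cases $(1,0)$ and $(2,1)$, which match exactly the paper's two final cases.

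Your approach has the advantage of being self-contained within the Hilbert function framework, bypassing both Kruskal-type uniqueness and the Sylvester bound; the paper's approach, on the other hand, makes the geometry of $F'$ as a concise binary cubic explicit, which dovetails with the inductive structure used later in Proposition~\ref{prop: main prop}. The one place where your write-up is terse is the h-vector elimination for $|Z'|\in\{5,6,7\}$; spelling out, for instance, that $|Z'|=7$ forces $\delta_2=1$ and hence $\delta_3,\delta_4\leq 1$ would make the argument fully transparent, but the claim is correct as stated.
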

\begin{proof} 
Let $p = \ell(A \cap B)$. Let $F'$, $A',B'$ be constructed from $F,A,B$ as in \autoref{rmk: pass to disjoint}. Then $A'$ is a non-redundant decomposition of $F'$ of length $4-p \leq 3$ and $B'$ is a non-redundant decomposition of $F'$ of length at most $4$.

Suppose $A'$ is not contained in a line. In particular, $\ell(A') =3$ and $F'$ is concise by \autoref{cor: uniqueness for LI decomp}. Moreover $A'$ is the unique decomposition of length $3$. This contradicts the existence of $B'$.

Therefore there exists $V' \subseteq V$, with $\dim V' = 2$ and $A' \subseteq \Lambda = \bbP V'$. In particular $F' \in S^3 V'$ is not concise in $S^3 V$. If $F'$ is not concise in $S^3 V'$, then $F' = L^3$ for some $L \in V$; therefore $\{[L]\}$ is a decomposition of $F'$ and by \autoref{prop: sylvester's d+2}, any other non-redundant decomposition of $F'$ must have length at least $4$. On the other hand $\ell(B') \leq 3$, because if $\ell(B') = 4$, then $B = B'$, and $F'$ would be concise by \autoref{lemma: minimal plus one concise}. Therefore, the condition $\ell(B') \leq 3$ provides a contradiction with \autoref{prop: sylvester's d+2}. This guarantees $F'$ is concise in $S^3 V'$.

To conclude, we analyze two possibilities. If $\ell(A') = 2$, then $A'$ is the unique decomposition of $F'$ of length $2$ and $\ell(B') = 3$. In this case $\ell(A \cap B) = 2$ and it consists of one point on $\Lambda$ and one not in $\Lambda$, because $B'$ contains three of the four points of $B$ and $B' \subseteq \Lambda$. In particular $A \setminus \Lambda = B \setminus \Lambda = \{ [L]\}$ with $F = F' + L^3$. If $\ell(A') = 3 = \ell(B') = 3$, then $A \cap B = A \setminus \Lambda = B \setminus \Lambda = \{ [L]\}$ with $F = F' + L^3$. 
\end{proof}

\section{The general case}

In this section, we give the proof of \autoref{thm: main theorem}. This will follow from an induction argument, where the cases $n=1,2,3$ give the base of the induction. We provide first a result which analyzes the case $n=3$. 

If $\dim V= 4$, the generic Waring rank in $\bbP S^3 V$ is $5$. Sylvester's Pentahedral Theorem \cite{Sylv:PrinciplesCalculusForms,Cleb:TheorieFlachen} guarantees that a generic form in $S^3 V$ has a unique decomposition. We first study the situation of two disjoint decompositions, see also \cite{Ball:StratificationSigma5}.

\begin{proposition}\label{prop: surfaces on union of two lines}
Let $\dim V = 4$. Let $F \in S^3 V$ be a concise form. Let $A,B \subseteq \bbP V$ be two disjoint non-redundant decompositions of $F$ with $\ell(A),\ell(B) = 5$. Then $A \cup B$ is contained in the union of two lines.
\end{proposition}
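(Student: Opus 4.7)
Set $Z = A \cup B$, so $\ell(Z) = 10$. The plan is to pin down the first difference of the Hilbert function $Dh_Z$ exactly, apply the BGM theorem to extract a conic $C$ containing almost all of $Z$, and then use conciseness to exclude the case where $C$ is planar.

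For the first step I would use conciseness together with the Cayley--Bacharach property. Since $F$ is concise, $Z$ spans $\bbP V = \bbP^3$, so $Dh_Z(0)=1$ and $Dh_Z(1)=3$. By \autoref{prop: CB for nonredundant}, $Z$ satisfies $\CB(3)$, and \autoref{thm: CB implies HF} (with $t=3$, $s=1$) gives $Dh_Z(3) + Dh_Z(4) \geq Dh_Z(0) + Dh_Z(1) = 4$. Combined with $\sum_t Dh_Z(t) = 10$ and the monotonicity in \autoref{prop: basics HF}(iii), a short case analysis rules out $Dh_Z(2) \geq 3$ (for which not enough mass remains in degrees $\geq 3$) and $Dh_Z(2) \leq 1$ (for which monotonicity forces $Dh_Z(3)+Dh_Z(4) \leq 2$). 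This pins down $Dh_Z = (1,3,2,2,2)$.

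Next I would apply \autoref{thm: BGM} with $t_0 = 2$: we have $Dh_Z(2) = Dh_Z(3) = 2 \leq 2$, so there exists a reduced curve $C \subseteq \bbP V$ of degree $2$ such that $Z' := Z \cap C$ satisfies $Dh_{Z'}(t) = Dh_C(t)$ for $t \leq 3$ and $Dh_{Z'}(t) = Dh_Z(t)$ for $t \geq 2$. A reduced curve of degree $2$ in $\bbP^3$ is either (i) a smooth conic in a plane, (ii) a pair of distinct intersecting lines in a plane, or (iii) a pair of skew lines. In cases (i) and (ii) one has $Dh_C = (1,2,2,2,\ldots)$, forcing $Dh_{Z'} = (1,2,2,2,2)$ and $\ell(Z') = 9$; in case (iii) one has $Dh_C = (1,3,2,2,\ldots)$, forcing $Dh_{Z'} = (1,3,2,2,2)$ and $\ell(Z') = 10$, whence $Z \subseteq C$.

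To finish I would eliminate the planar cases via conciseness. In cases (i) or (ii), $9$ of the $10$ points of $Z$ lie in a plane $\Pi \subsetneq \bbP V$; since $\ell(A) = \ell(B) = 5$, the pigeonhole principle forces at least one of $A$ or $B$ to be entirely contained in $\Pi$. But then the corresponding $5$ linear forms lie in the $3$-dimensional subspace $V' \subseteq V$ cut out by the equation of $\Pi$, so $F \in S^3 V'$, contradicting conciseness. Hence only case (iii) can occur, and $Z = A \cup B$ is contained in the union of two (necessarily skew) lines. The main technical step is the Hilbert-function computation; the geometric reduction via BGM and the conciseness argument are then quite direct.
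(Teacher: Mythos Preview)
Your proof is correct and follows essentially the same route as the paper: compute $Dh_Z=(1,3,2,2,2)$ via conciseness plus Cayley--Bacharach, apply \autoref{thm: BGM} to obtain a degree-$2$ curve $C$, and use conciseness to exclude the planar case. The only cosmetic difference is that the paper invokes \autoref{thm: BGM} at $t_0=3$ rather than $t_0=2$; both choices yield the same conclusion.
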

\begin{proof}
 Since $A \cap B =\emptyset$, the union $Z=A\cup B$ is a set of length $10$. Since $F$ is concise, the decomposition $A$ is not contained in a hyperplane; therefore $Dh_A(0) = 1$, $Dh_A(1) = 3$. Since $A,B$ are non-redundant, by \autoref{prop: CB for nonredundant} $Z$ satisfies the Cayley-Bacharach property in degree $3$. Therefore $Dh_Z(4) + Dh_Z(3) \geq Dh_Z(0) + Dh_Z(1) \geq 4$. We deduce $Dh_Z(2)\leq 2$, which implies $Dh_Z(3), Dh_Z(4) \leq 2$ by \autoref{prop: basics HF}(iii). This guarantees $Dh_Z = (1,3,2,2,2)$.
 
 We apply \autoref{thm: BGM}: we have $Dh_Z(3) = Dh_Z(4) = 2 \leq 2$. Therefore there is a curve $C$ of degree $2$ such that, setting $Z' = Z \cap C$, one has $Dh_{Z'}(t) = Dh_{C} (t)$ for $t \leq 4$. Since $\deg(C) = 2$, $C$ is either a plane conic or a union of two lines. If $C$ is a plane conic, we have $Dh_{Z'}(t) = (1,2,2,2,2,\delta_5)$, with $\delta_{5} = 0,1$; in this case $Z' = Z \cap C$ contains at least $9$ points, therefore either $A \subseteq C$ or $B \subseteq C$, in contradiction with the conciseness of $F$, which implies $\langle A \rangle = \langle B \rangle = \bbP V$. Therefore $C$ is union of two lines, and we have $Dh_{Z'} = (1,3,2,2,2)$; this guarantees $Z = Z' \subseteq C$, as desired.
 \end{proof}

On the other hand, when $n \geq 4$, two distinct decompositions always intersect, as shown in the next result.
\begin{proposition} \label{prop: A and B must intersect} 
Let $n \geq 4$ and let $V$ be a vector space with $\dim V = n+1$. Let $F \in S^3 V$ be a concise form. Let $A$ be a non-redundant decomposition of $F$ of length $n+2$ and let $B$ be a non-redundant decomposition of $F$ of length $s \leq n+2$. Then $A \cap B \neq \emptyset$. 
\end{proposition}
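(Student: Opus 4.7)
The plan is to argue by contradiction. Suppose $A \cap B = \emptyset$, and set $Z = A \cup B$, a set of length $\ell(Z) = n+2+s$. I want to derive a contradiction purely from the combinatorics of the first difference $Dh_Z$ of the Hilbert function, exploiting the Cayley-Bacharach property.

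First I record the left tail of $Dh_Z$: since $F$ is concise, $A$ already spans $\bbP V$, hence so does $Z$, giving $Dh_Z(0) = 1$ and $Dh_Z(1) = n$. Next, because $A$ and $B$ are disjoint non-redundant decompositions of the same cubic $F$, \autoref{prop: CB for nonredundant} tells me that $Z$ satisfies the Cayley-Bacharach property in degree $3$. Feeding this into \autoref{thm: CB implies HF} with $t = 3$ and parameter $1$ yields the crucial lower bound
\[
Dh_Z(3) + Dh_Z(4) \;\geq\; Dh_Z(0) + Dh_Z(1) \;=\; n + 1.
\]

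Now I bound the middle entry $Dh_Z(2)$ using the length identity $\ell(Z) = \sum_t Dh_Z(t)$. Combining the left-tail equality, the Cayley-Bacharach lower bound, and nonnegativity of the remaining terms gives
\[
n+2+s \;\geq\; (n+1) + Dh_Z(2) + (n+1),
\]
so $Dh_Z(2) \leq s - n \leq 2$, since $s \leq n+2$. At this point \autoref{prop: basics HF}(iii) propagates the bound: $Dh_Z(2) \leq 2$ forces $Dh_Z(3) \leq Dh_Z(2) \leq 2$, and then (applying (iii) again) $Dh_Z(4) \leq Dh_Z(3) \leq 2$. Thus $Dh_Z(3) + Dh_Z(4) \leq 4$, in direct conflict with the Cayley-Bacharach bound $\geq n+1 \geq 5$ whenever $n \geq 4$.

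The heart of the argument is the tension between the Cayley-Bacharach lower bound on $Dh_Z(3) + Dh_Z(4)$, which grows linearly in $n$, and the small-growth rigidity from \autoref{prop: basics HF}(iii), which caps $Dh_Z(3)$ and $Dh_Z(4)$ at $Dh_Z(2) \leq 2$ once the total length budget is tight. I do not foresee any serious obstacle beyond checking that the numerology comes out right, and the hypothesis $n \geq 4$ is precisely what makes the two bounds incompatible.
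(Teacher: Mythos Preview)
Your proof is correct and follows essentially the same approach as the paper's: assume disjointness, use conciseness to get $Dh_Z(0)+Dh_Z(1)=n+1$, invoke Cayley--Bacharach (\autoref{prop: CB for nonredundant} and \autoref{thm: CB implies HF}) to force $Dh_Z(3)+Dh_Z(4)\geq n+1$, then bound $Dh_Z(2)\leq 2$ by the length count and propagate via \autoref{prop: basics HF}(iii) to reach a contradiction for $n\geq 4$. The paper's version is terser but the argument is identical.
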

\begin{proof}
Assume $B\cap A=\emptyset$, so that $Z=A\cup B$ is a set of length at most $2n+4$. Since $F$ is concise, then $A$ is not contained in a hyperplane; therefore $Dh_A(0) = 1$, $Dh_A(1) = n$. Since $A,B$ are non-redundant, $Z$ satisfies the Cayley-Bacharach property; therefore $Dh_Z(4) + Dh_Z(3) \geq Dh_Z(0) + Dh_Z(1) \geq n+1$. We deduce $Dh_Z(2)\leq 2$, which implies $Dh_Z(3), Dh_Z(4) \leq 2$; this provides a contradiction, for $n \geq 4$.
\end{proof}

The proof of \autoref{thm: main theorem} will follow because in the setting of \autoref{prop: A and B must intersect} one can guarantee that the intersection between two decompositions must be surprisingly large, similarly to \autoref{lemma: minimal plus one for fermat}.
\begin{proposition} \label{prop: main prop}
Let $V$ be a vector space with $\dim V = n+1$. Let $F \in S^3 V$ be a concise form. Let $A$ be a non-redundant decomposition of $F$ of length $n+2$ and let $B$ be a non-redundant decomposition of $F$ of length $s \leq n+2$. Then one of the following holds:
\begin{itemize}
 \item $\ell(A\cap B) \geq n-2$, and the points of $(A \setminus B) \cup (B \setminus A)$ are contained in the union of two planes;
 \item $\ell(A \cap B) \geq n-3$ and the points of $(A \setminus B) \cup (B \setminus A)$ are contained in the union of two lines.
\end{itemize}
\end{proposition}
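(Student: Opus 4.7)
The plan is to prove the proposition by induction on $n$. The base cases $n = 1, 2$ are trivial: the bounds $n - 2, n - 3$ are non-positive and $\bbP V$ has dimension at most $2$, so any subset is automatically contained in a union of two lines. For $n = 3$, Proposition \ref{prop: surfaces on union of two lines} handles the disjoint case $A \cap B = \emptyset$, producing the union-of-two-lines structure; if $A \cap B \neq \emptyset$, Remark \ref{rmk: pass to disjoint} reduces the problem to disjoint decompositions of a form $F'$ living in a proper subspace $V' \subsetneq V$, where either the $n \leq 2$ base case or Proposition \ref{prop: surfaces on union of two lines} applies.

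For the inductive step, assume $n \geq 4$ and the result in all strictly smaller dimensions. By Proposition \ref{prop: A and B must intersect}, $A \cap B$ is non-empty, so fix $[L] \in A \cap B$ with coefficient $\alpha$ in the $A$-decomposition and $\beta$ in the $B$-decomposition, and set $F' = F - \alpha L^3$. If $\alpha = \beta$, the sets $A' = A \setminus \{[L]\}$ and $B' = B \setminus \{[L]\}$ are non-redundant decompositions of $F'$ of lengths $n+1$ and at most $n+1$; if $\alpha \neq \beta$, then $A' = A \setminus \{[L]\}$ of length $n+1$ and $B^* = B$ (with $[L]$ carrying the modified coefficient $\beta - \alpha \neq 0$) of length $s \leq n+2$ are non-redundant decompositions of $F'$. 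In both variants the goal is to reduce to the inductive hypothesis applied to $F'$ inside its essential subspace $V'$.

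The core of the argument is the case $\alpha = \beta$. If $\dim V' = n+1$, then $F'$ is concise in $V$ and $A'$ is a linearly independent decomposition of length $n+1$ spanning $V$; Corollary \ref{cor: uniqueness for LI decomp} forces $B' = A'$, and hence $A = B$, in which case the conclusion is trivial. If $\dim V' = n$, a dimension and non-redundancy check shows $A', B' \subseteq \bbP V'$; the inductive hypothesis applied to $F' \in S^3 V'$ with decompositions of lengths $n+1 = (n-1)+2$ and $\leq (n-1)+2$ yields either $\ell(A' \cap B') \geq n-3$ with two-planes structure, or $\ell(A' \cap B') \geq n-4$ with two-lines structure; reintroducing the common point $[L]$ transfers the conclusion directly to $A$ and $B$, since $L \in A \cap B$ implies that the symmetric differences of the two pairs coincide. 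The case $\alpha \neq \beta$ is handled analogously: when $F'$ is concise in $V$ with $s = n+2$, Lemma \ref{lemma: minimal plus one for fermat} applied to $A'$ and $B^*$ forces $A \triangle B$ to be collinear, a sharp instance of case (b); when $F'$ is not concise in $V$, the same inductive reduction as in the equal-coefficient case applies.

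The main obstacle is ruling out the degenerate situation $\dim V' \leq n - 1$ and rigorously verifying that $A', B' \subseteq \bbP V'$ when $\dim V' = n$. Both are addressed by a careful combination of Proposition \ref{prop: basics HF}(iii) and Theorem \ref{thm: CB implies HF}: the Cayley--Bacharach property in degree $3$ tightly constrains the Hilbert function of $A' \cup B'$, preventing non-redundant decompositions of the relevant length from escaping the essential subspace; moreover, a drop in essential dimension by two or more can occur only under very restrictive collinearity or coplanarity of the linear forms of $A$ and $B$, configurations that themselves already fall inside the conclusion of case (b).
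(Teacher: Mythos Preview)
Your inductive skeleton is the same as the paper's: pick a common point $[L]\in A\cap B$, subtract to form $F'$, and branch on whether $F'$ is concise in $V$. The paper subtracts the $B$-coefficient rather than the $A$-coefficient, but that is immaterial. However, one step of your plan would fail as written. In the case $\alpha\neq\beta$ you say that ``when $F'$ is not concise in $V$, the same inductive reduction as in the equal-coefficient case applies.'' It does not: here $B^*=B$ remains a non-redundant decomposition of $F'$, and since $F$ is concise, $\langle B\rangle=\bbP V$; thus $B^*$ cannot lie in the proper subspace $\bbP V'$ and the inductive hypothesis is inapplicable. The paper's resolution is that this subcase is \emph{vacuous}: because $\langle B\rangle=\bbP V$ and $\ell(B)\le n+2$, \autoref{lemma: minimal plus one concise} (or \autoref{cor: uniqueness for LI decomp} when $s=n+1$) forces $F'$ to be concise in $V$, so $\alpha\neq\beta$ and $F'$ non-concise cannot coexist. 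You need this observation, not an inductive reduction, to close the case.

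Your final paragraph also overcomplicates two points that are in fact immediate. The situation $\dim V'\le n-1$ cannot occur: $F=F'+\alpha L^3$ lies in $S^3(V'+\langle L\rangle)$, and conciseness of $F$ in $V$ gives $V\subseteq V'+\langle L\rangle$, hence $\dim V'\ge n$. Similarly, once $\dim V'=n$ the containments $A',B'\subseteq\bbP V'$ follow directly from \autoref{cor: uniqueness for LI decomp}: if either set of $\le n+1$ points spanned $V$, it would be linearly independent of length exactly $n+1$, forcing $F'$ concise in $V$. No Cayley--Bacharach or Hilbert-function argument is needed at this step. Finally, note that the paper dispatches $s=n+1$ at the very start via \autoref{lemma: minimal plus one for fermat}, which streamlines the rest of the induction to the symmetric case $\ell(A)=\ell(B)=n+2$.
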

 \begin{proof} 
 If $s = n+1$, then the statement follows immediately from \autoref{lemma: minimal plus one for fermat}. Therefore assume $s = n+2$. 
 
Let $A = \{ [L_1 ] \vvirg [L_{n+2}] \}$, $B = \{ [M_1 ] \vvirg [M_{n+2}] \}$. We proceed by induction on $n$. The case $n=1$ and $n=2$ are clearly verified. So assume $n \geq 3$. 

If $ n =3$ and $A \cap B = \emptyset$, then by \autoref{prop: surfaces on union of two lines} condition (ii) is verified. If $ n >3$, then $A \cap B \neq \emptyset$ by \autoref{prop: A and B must intersect}. Therefore assume $A \cap B \neq \emptyset$ with $[L_{n+2}] = [M_{n+2}] \in A\cap B$.

 Write, as usual,
 \begin{align*}
  F &= \alpha_1 L_1^3 + \cdots + \alpha_{n+1} L_{n+1}^3 + \alpha_{n+2} L_{n+2}^3 = \\
    &= \beta_1 M_1^3 + \cdots + \beta_{n+1} M_{n+1}^3 + \beta_{n+2} L_{n+2}^3.
 \end{align*}
Let $F' = F - \beta_{n+2} L_{n+2}^3$. Then $F'$ has a decomposition $B' = B \setminus \{ [L_{n+2}]\}$ of length $n+1$.

If $B'$ is linearly independent, by \autoref{cor: uniqueness for LI decomp}, we deduce that $F'$ is concise and $B'$ is its unique decomposition of length $n+1$. If $\alpha_{n+2} \neq \beta_{n+2}$, then $A$ defines a non-redundant decomposition of $F'$ of length $n+2$; in this case \autoref{lemma: minimal plus one for fermat} applies, and we deduce $\ell(A \cap B') \geq n-1$ and $A \setminus B'$ is collinear. Since $A \setminus B' = A \setminus B$, condition (ii) holds. If $\alpha_{n+2} = \beta_{n+2}$, then $A' = A \setminus \{[L_{n+2}]\}$ is a decomposition of $F'$ of length $n+1$; by Kruskal's criterion \autoref{cor: uniqueness for LI decomp}, we conclude $A' = B'$ which implies $A = B$ and the statement follows.

Now assume $B'$ is not linearly independent, which implies that $F'$ is not concise. Let $V' \subseteq V$ be the subspace in which $F'$ is concise. Since $F$ is concise, we have $\dim V' = \dim V - 1 = n$, and $\langle B' \rangle = V'$. Now, if $\alpha_{n+2} \neq \beta_{n+2}$, $A$ defines a non-redundant decomposition of $F'$ of length $n+2$. Moreover, $\langle A \rangle = V$. By \autoref{lemma: minimal plus one concise}, this would imply that $F'$ is concise, which is a contradiction. Therefore $\alpha_{n+2} = \beta_{n+2}$.

We reduced to the following setting. The form $F'$ is concise in $S^3 V'$, and $A',B'$ are two non-redundant decompositions of $F'$ of length $n+1$. Therefore the induction hypothesis applies. We deduce that condition (i) or condition (ii) hold for $A '$ and $B'$. Since $A = A' \cup\{ [L_{n+2}]\}$ and $B = B' \cup\{ [L_{n+2}]\}$, we deduce that condition (i) or condition (ii) holds for $A$ and $B$ as well.
\end{proof}

The proof of \autoref{thm: main theorem} is obtained directly from the results of this section.
\begin{proof}[Proof of \autoref{thm: main theorem}]
If $F$ has a unique decomposition $A$ of length $n+2$, then the Kruskal rank of $A$ must be at least $4$. Indeed, let $A' \subseteq A$ be a minimal linearly dependent set, so that the Kruskal rank of $A$ equals $\ell(A')-1$. Let $A = A' \cup A''$: $A''$ is linearly independent and $\langle v_3(A') \rangle \cap \langle v_3(A'') \rangle = \emptyset$. Correspondingly $F = F'+F''$, with $A'$ non-redundant decomposition of $F'$ and $A''$ non-redundant decomposition of $F''$. If $\ell(A') \leq 4$, then $F'$ has many decompositions of length $\ell(A')$, by the discussion on the case $n=2$. This implies that $A$ is not the unique decomposition of $F$, in contradiction with the assumption.

If $F$ does not have a unique decomposition, then either case (II) or case (III) occur, by \autoref{prop: main prop}.
\end{proof}

We record two easy consequence of \autoref{thm: main theorem}.
\begin{corollary}
Let $F \in S^3 V$ be a concise form admitting at least two distinct non-redundant decompositions of length $n+2$. Then the Kruskal rank of any non-redundant decomposition of length $n+2$ is at most $3$. Moreover, the family of decompositions of $F$ of length $n+2$ has dimension at least $2$. 
\end{corollary}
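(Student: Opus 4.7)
The plan is to deduce both assertions from \autoref{thm: main theorem} together with the low-dimensional analyses carried out in the previous section ($n=1$ and $n=2$). The Kruskal rank bound is immediate: the hypothesis that $F$ admits more than one non-redundant decomposition of length $n+2$ rules out case (I) of \autoref{thm: main theorem}, and both remaining cases (II) and (III) assert explicitly that every such decomposition has Kruskal rank at most $3$.

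For the dimension claim, I would exhibit an explicit $2$-dimensional family of non-redundant length-$(n+2)$ decompositions of $F$. Fix any such decomposition $A$, of Kruskal rank $k \in \{2,3\}$, and pick a minimal linearly dependent subset $A' \subseteq A$, necessarily of cardinality $k+1 \in \{3,4\}$; set $A'' = A \setminus A'$, which is linearly independent of cardinality $n+1-k$. Let $V' = \langle A' \rangle$ and $V'' = \langle A'' \rangle$; a dimension count (using $\dim V' = k$, $\dim V'' = n+1-k$, and $V = V'+V''$ from the conciseness of $F$) gives $V = V' \oplus V''$. Correspondingly $F$ decomposes uniquely as $F = F' + F''$ with $F' \in S^3 V'$, $F'' \in S^3 V''$, and $A', A''$ are non-redundant decompositions of these. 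Since $A'$ spans $V'$ and has length $\dim V' + 1$, \autoref{lemma: minimal plus one concise} guarantees that $F'$ is concise in $S^3 V'$.

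Next I would invoke the low-dimensional analyses. If $k=2$, then $F'$ is a concise binary cubic of rank at most $3$, so by the case $n=1$ it admits a $2$-dimensional family of length-$3$ non-redundant decompositions; if $k=3$, then $F'$ is a concise ternary cubic of rank at most $4$, so by the case $n=2$ it admits a $2$-dimensional family of length-$4$ non-redundant decompositions. Replacing $A'$ by any $\tilde A'$ from this family while keeping $A''$ fixed yields $\tilde A' \cup A''$, a length-$(n+2)$ subset of $\bbP V$. The direct-sum decomposition $V = V' \oplus V''$ makes $\bbP V'$ and $\bbP V''$ disjoint in $\bbP V$; projecting onto the summands $S^3 V'$ and $S^3 V''$ then shows that $\tilde A' \cup A''$ is a non-redundant decomposition of $F$ and depends injectively on $\tilde A'$. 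This exhibits the desired $2$-dimensional family inside $(\bbP V)^{(n+2)}$.

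The main step requiring care is the verification of the direct-sum splitting $V = V' \oplus V''$ together with the bookkeeping that the reassembled decompositions remain non-redundant; these are routine consequences of the minimality of $A'$, and are essentially the same splitting argument used in the paper's proof of the Kruskal rank statement for forms with a unique decomposition.
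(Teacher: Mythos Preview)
Your proof is correct and follows essentially the same strategy as the paper: deduce the Kruskal rank bound directly from \autoref{thm: main theorem}, then isolate a small linearly dependent subset of a decomposition, apply the low-dimensional analysis to the corresponding sub-form, and glue back. The paper's version is terser---it always picks four linearly dependent points and reduces to the $n=2$ case, without explicitly verifying the direct-sum splitting or non-redundancy of the reassembled decompositions---whereas your use of a \emph{minimal} dependent subset and the explicit verification that $V = V' \oplus V''$ make the argument cleaner and handle the Kruskal-rank-$2$ case via $n=1$ rather than $n=2$.
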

\begin{proof}
Since $F$ has at least two non-redundant decompositions of length $n+2$, either case (II) or case (III) of \autoref{thm: main theorem} is verified. This guarantees every non-redundant decomposition of $F$ of length $n+2$ has Kruskal rank at most $3$.

Write $A = \{ [L_0] \vvirg [L_{n+1}]\}$ for a non-redundant decomposition of length $n+2$ and assume $[L_0]\vvirg [L_3]$ are linearly dependent. Write $F = L_0^3 + \cdots + L_{n+1}^3$ and define $F' = L_0^3 + \cdots + L_3^3$. Then $F' \in S^3 V'$ with $\dim V' = 3$ and it has a non-redundant decomposition of length $4$. From the discussion on the case $n=2$, we deduce that $F'$ has a $2$-dimensional family of decomposition of length $4$. For every such decomposition $B'$, define $B = B' \cup \{ [L_4] \vvirg [L_{n+1}]\}$. Then $B$ is a non-redundant decomposition of $F$, showing that the family of decompositions of $F$ has dimension at least $2$.
\end{proof}

As a byproduct of these results, we completely characterize the genericity condition of Sylvester's Pentahedral Theorem. \autoref{thm: sylvester penta} can be regarded as an extension of \autoref{thm: n3} and of some of the results of \cite{BerGimIda:ComputingSymmetricRankSymmetricTensors}.
\begin{theorem}\label{thm: sylvester penta}
 Let $\dim V = 4$ and let $F \in S^3 V$ be a concise form. Then the following are equivalent:
 \begin{enumerate}[(i)]
  \item $F$ has at least two decompositions as sum of five powers of linear forms; 
  \item $F$ has a $2$-dimensional family of decompositions as sum of five powers of linear forms;
  \item $F = F' + L^3$ for $F' \in S^3 V'$ where $V' \subseteq V$ is a subspace of dimension $3$ and $L$ is a linear form with $L \notin V'$.
 \end{enumerate}
\end{theorem}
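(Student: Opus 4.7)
The plan is to close the cycle (ii) $\Rightarrow$ (i) $\Rightarrow$ (iii) $\Rightarrow$ (ii). The implication (ii) $\Rightarrow$ (i) is immediate, and (i) $\Rightarrow$ (ii) follows directly from the corollary preceding the theorem, which guarantees that the family of length-$5$ decompositions of $F$ has dimension at least $2$ as soon as there are at least two such decompositions. So the genuine content is the equivalence of (iii) with (i).

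For (i) $\Rightarrow$ (iii) I would appeal to the preceding corollary once more: any non-redundant length-$5$ decomposition $A = \{[L_1], \ldots, [L_5]\}$ has Kruskal rank at most $3$. Because $\dim V = 4$, this forces four of the five points to be linearly dependent; say $[L_1], \ldots, [L_4]$ lie on a plane $\bbP V' \subseteq \bbP V$ with $\dim V' = 3$. Conciseness of $F$ forces $\langle A \rangle = \bbP V$, so $[L_5] \notin \bbP V'$. Writing $F = \sum_i \alpha_i L_i^3$ and setting $F' := \sum_{i=1}^4 \alpha_i L_i^3 \in S^3 V'$ gives $F = F' + \alpha_5 L_5^3$, and absorbing the scalar $\alpha_5^{1/3}$ into $L_5$ yields the expression of (iii).

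For (iii) $\Rightarrow$ (ii) I would first verify that conciseness of $F$ forces $F'$ to be concise in $V'$. Pick a basis of $V$ consisting of a basis $e_1, e_2, e_3$ of $V'$ together with $L$ (using $V = V' \oplus \langle L \rangle$, which holds since $L \notin V'$). The first-order partial derivatives of $F$ along $e_1, e_2, e_3$ coincide with those of $F'$ and lie in $S^2 V'$, while the partial along $L$ equals $3L^2$, lying in the line spanned by $L^2$. These two subspaces of $S^2 V$ intersect trivially, so the four partials are linearly independent iff $F'$ is concise in $V'$. Using in addition that $\rmR(F') \leq 4$ (consistent with the paper's focus on forms of rank at most $n + 2 = 5$), the $n = 2$ analysis at the beginning of this section, based on the generic $2$-dimensional fibers of $A\sigma_4(v_3(\bbP V')) \to \bbP S^3 V'$, produces a $2$-dimensional family of non-redundant length-$4$ decompositions $B'$ of $F'$. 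For each such $B'$, the set $B = B' \cup \{[L]\}$ is a non-redundant length-$5$ decomposition of $F$, the non-redundancy being immediate from $[L] \notin \bbP V' \supseteq \langle B' \rangle$. This yields the $2$-dimensional family required in (ii).

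The step I expect to be the main obstacle is the assertion $\rmR(F') \leq 4$ in the last paragraph, since (iii) as written does not manifestly rule out $\rmR(F') = 5$. This is resolved by the paper's standing assumption that the forms under consideration have rank at most $n + 2$, or equivalently by a bootstrap: any length-$5$ decomposition of $F$, whose existence is assumed in (i), already exhibits via the splitting used in (i) $\Rightarrow$ (iii) a length-$4$ decomposition of a cubic in $V'$ that coincides with $F'$, forcing $\rmR(F') \leq 4$.
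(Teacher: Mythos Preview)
Your approach matches the paper's: the same cycle $(ii)\Rightarrow(i)\Rightarrow(iii)\Rightarrow(ii)$, with $(i)\Rightarrow(iii)$ obtained from low Kruskal rank (you invoke the preceding corollary; the paper appeals directly to the trichotomy of \autoref{thm: main theorem}, which is the same content) and $(iii)\Rightarrow(ii)$ obtained from the $2$-dimensional family of length-$4$ decompositions of a concise ternary cubic. You are in fact more careful than the paper in verifying that $F'$ is concise in $V'$ and in flagging that $(iii)\Rightarrow(ii)$ needs $\rmR(F')\le 4$.

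That said, your bootstrap does not close the gap you identified. In (iii) the data $V',F',L$ are \emph{given}. Your bootstrap starts from a length-$5$ decomposition of $F$ (assumed in (i)) and produces, via the splitting of $(i)\Rightarrow(iii)$, some \emph{other} triple $V'',F'',L''$ with $\rmR(F'')\le 4$; there is no reason this $F''$ coincides with the $F'$ handed to you in (iii). So the bootstrap only reproves $(i)\Rightarrow(ii)$ and leaves the standalone implication $(iii)\Rightarrow(ii)$ unjustified when $\rmR(F')=5$. Your alternative fix, the ``standing assumption $\rmR(F)\le n+2$'', does not obviously help either, since $\rmR(F)\le 5$ does not directly force $\rmR(F')\le 4$ for the particular $F'$ appearing in (iii). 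The paper's own proof has precisely the same lacuna: it simply asserts that ``since $\dim V'=3$, $F'$ has at least a $2$-dimensional family of decompositions'' without addressing the possibility $\rmR(F')=5$.
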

\begin{proof}
 \underline{$(i)\Rightarrow (iii)$.} Since the decomposition is not unique, either condition (I) or condition (II) of \autoref{thm: main theorem} hold. If condition (II) holds, then $F = F' + L^3$ as desired. If condition (I) holds, then $F = F'' + L_1^3 + L_2^3$ where $F'' \in S^3 V''$ for a subspace $V'' \subseteq V$ of dimension $2$; setting $F' = F'' + L_1^3$ we obtain the desired expression.
 
 \underline{$(iii) \Rightarrow (ii)$} Let $F = F' + L^3$. Every non-redundant decomposition of $F'$ provides a non-redundant decomposition of $F$. Since $\dim V'=3$, $F'$ has at least a $2$-dimensional family of decompositions.
 
 \underline{$(ii) \Rightarrow (i)$} This is trivially satisfied.
\end{proof}

\begin{corollary}
Let $\dim V = 4$.  A form $F \in S^3V$ has a unique decomposition as sum of five powers of linear forms if and only if 
\[
[ F ] \notin \bfJ( \Sub_3 , v_3(\bbP V))
\]
where $\Sub_3 $ denotes the variety of non-concise forms and $\bfJ(-,-)$ denotes the geometric join of two varieties.
\end{corollary}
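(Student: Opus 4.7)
The plan is to derive this corollary directly from \autoref{thm: sylvester penta} by unwinding the definition of the join and splitting according to whether $F$ is concise.

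The first step is to identify $\bfJ(\Sub_3, v_3(\bbP V))$ explicitly. Since $\Sub_3$ and $v_3(\bbP V)$ are closed projective subvarieties, their join is the image of a projective incidence variety under a proper morphism, hence itself closed, and it coincides with the naive union $\{ [F' + L^3] : F' \in \Sub_3 \cup \{0\},\ L \in V\}$ (absorbing the projective scalar via $L \mapsto \lambda^{1/3} L$). With this description, $[F] \in \bfJ(\Sub_3, v_3(\bbP V))$ if and only if $F$ is non-concise or $F = F' + L^3$ for some non-concise $F'$ and some $L \in V$.

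I would then split into cases. If $F$ is non-concise, then $[F] \in \Sub_3 \subseteq \bfJ(\Sub_3, v_3(\bbP V))$, so I must verify that $F$ does not have a unique length-$5$ decomposition: writing $F \in S^3 V'$ with $\dim V' = k \leq 3$, a dimension count on $A\sigma_5(v_3(\bbP V'))$ produces a positive-dimensional family of length-$5$ decompositions when $k \in \{2,3\}$, and for the degenerate subcase $k = 1$, where $F$ is a cube, such a family is constructed directly by placing $L$ together with four generic linear forms in a $2$-dimensional subspace of $V$. If $F$ is concise, I would invoke \autoref{thm: sylvester penta}: by the equivalence $(i)\Leftrightarrow(iii)$, uniqueness of the length-$5$ decomposition is equivalent to the non-existence of a representation $F = F' + L^3$ with $F' \in S^3 V'$, $\dim V' = 3$ and $L \notin V'$. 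Any representation $F = F' + L^3$ arising from $[F] \in \bfJ(\Sub_3, v_3(\bbP V))$ automatically has $L \notin V'$ (otherwise $F$ would lie in $S^3 V'$, contradicting conciseness); and if $\dim V' < 3$ I would enlarge $V'$ to a three-dimensional subspace $V'' \subseteq V$ not containing $L$, which is possible since $\dim V = 4$. This places $F$ into condition (iii) and closes the equivalence in both directions.

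The main obstacle I anticipate is the routine bookkeeping in passing between ``some $F' \in \Sub_3$, some $L \in V$'' and the precise hypotheses $\dim V' = 3$, $L \notin V'$ of condition (iii), together with the $k = 1$ subcase of the non-concise analysis; the substantive mathematical content sits entirely in \autoref{thm: sylvester penta}.
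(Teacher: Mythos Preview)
The paper does not prove this corollary; it is stated without argument, as an immediate consequence of \autoref{thm: sylvester penta}. Your outline carries out precisely that derivation, supplying the bookkeeping the paper omits (the non-concise case, the identification of membership in the join with condition (iii), the adjustment of $\dim V'$ and of the condition $L \notin V'$). This is the intended argument, and in that sense your proposal matches the paper's approach while being more explicit.

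One point deserves care. In the concise case you pass from ``$F$ has a unique length-$5$ decomposition'' to the negation of (i) in \autoref{thm: sylvester penta}; but not-(i) only gives \emph{at most one} such decomposition, and these coincide only when $\rmR(F)\leq 5$, which your argument does not verify. Taken literally the statement actually fails without that hypothesis: the form $F = x_0(x_1^2+x_2^2+x_3^2)$ is concise and satisfies $[F]\notin\bfJ(\Sub_3,v_3(\bbP V))$ (a direct check shows $F-L^3$ is concise for every $L$), yet $\rmR(F)\geq 6$, since every $5$-dimensional subspace of $(F^\perp)_2$ has zero locus of cardinality at most $2$ (a pencil of ternary quadrics through the rank-$3$ point $[x_1^2+x_2^2+x_3^2]$ meets the Veronese of squares in at most one point, because any two squares span only rank-$\leq 2$ forms). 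Thus the corollary should be read with the tacit hypothesis that $F$ admits at least one length-$5$ decomposition; under that reading your argument is complete, and the paper claims no more.
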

The variety $\Sub_3$ is called \emph{subspace variety} in \cite[Ch. 3]{Lan:TensorBook} and it is defined by the vanishing of $4 \times 4$ minors of the first partial derivative map, see, e.g., \cite{Car:ReducingNumberVariables}. We refer to \cite[Ch. 8]{Harris:AlgGeo} for the definition and the basic properties of the join of two varieties.

\section{Characterization of the Terracini locus}\label{sec:terraciniloci}

In this section, we characterize a subvariety of the $(n+2)$-th Terracini locus of the third Veronese embedding of $\bbP^n$. Define the \emph{$r$-th concise Terracini locus} to be the subvariety of the Terracini locus arising from points in $\bbP^n$ whose span is the entire space. More precisely, for a vector space $V$ of dimension $n+1$, define
\[
 \bbT^{\con}_{r}(v_3(\bbP V)) =  \bar{\bbT_{r}(v_3(\bbP V)) \cap  \{ v_3(A) : A \in \bbP V^{(r)} \text{ and } \langle A \rangle = \bbP V \}}.
 \]
It is clear that if $ \bbT^{\con}_{r}(v_3(\bbP V)) $ is non-empty, then it is (union of) irreducible components of $\bbT_r(v_3(\bbP V))$. \autoref{thm: main theorem} allows us to describe $ \bbT^{\con}_{n+2}(v_3(\bbP V))$ as the closure of the set $v_3(K_3) \subseteq \bbP V^{(n+2)}$, where $K_3$ is the orbit, for the action of $\GL(V)$, described in \autoref{lemma: orbits}. In particular, this guarantees that it is irreducible. 
\begin{theorem}\label{thm: terracini locus characterization}
The concise Terracini locus $\bbT^{\con}_{n+2}(v_3(\bbP V))$ is the closure of 
\begin{equation}\label{eqn: concise terracini}
v_3(K_3) = \{ v_3(A) \in v_3(\bbP V) ^{(n+2)}: A \text{ concise with four coplanar points}\}.
\end{equation}
In particular, $\bbT^{\con}_{n+2}(v_3(\bbP V))$ is irreducible of dimension $n(n+1) + 2$.
\end{theorem}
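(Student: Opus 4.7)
The plan is to read off $\bbT^{\con}_{n+2}(v_3(\bbP V))$ orbit-by-orbit using the $\GL(V)$-orbit decomposition from \autoref{lemma: orbits}. Since tangent spaces of $v_3(\bbP V)$ transform equivariantly under $\GL(V)$, membership in $\bbT_{n+2}(v_3(\bbP V))$ is constant on each orbit $K_r \subseteq \Omega$. So it suffices to check, for each $r=2,\ldots,n+1$, whether the representative $A_r = \{[e_0],\ldots,[e_n],[e_0+\cdots+e_{r-1}]\}$ belongs to the Terracini locus. The irreducibility and the dimension $n(n+1)+2$ will then follow from \autoref{lemma: orbits} once the ``bad'' orbits are identified with (the closure of) $K_3$.

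The core computation is the following. Writing $L_i=e_i$ for $i\le n$ and $L_{n+1}=e_0+\cdots+e_{r-1}$, the affine tangent space $T_{[L_i^3]}v_3(\bbP V)$ equals $L_i^2\cdot V\subseteq S^3V$, so the condition $A_r\in \bbT_{n+2}(v_3(\bbP V))$ reads
\[
  \dim\Bigl(\textstyle\sum_{i=0}^{n+1}L_i^2 V\Bigr) < (n+2)(n+1).
\]
My first observation is that $\sum_{i=0}^n e_i^2V$ is the span of the monomials $e_i^2e_j$ and has dimension exactly $(n+1)^2$. Writing $L_{n+1}^2 = \sum_{a\le r-1} e_a^2 + C_r$, where $C_r = 2\sum_{a<b\le r-1}e_a e_b$, the first summand already lies in $\sum e_i^2V$, so the contribution of $L_{n+1}^2 V$ modulo $\sum e_i^2 V$ equals the image of the linear map
\[
  \mu_r\colon V \longrightarrow S^3V\big/\textstyle\sum_{i=0}^n e_i^2V, \qquad v\longmapsto v\cdot C_r.
\]
Inspecting coefficients on the triple-distinct monomials $e_pe_qe_s$ one finds that $\ker\mu_r$ is cut out by the equations $v_s=0$ for every $s\ge r$ together with $v_p+v_q+v_s=0$ for every triple $p<q<s\le r-1$.

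A brief linear-algebra check then yields $\dim\ker\mu_r=2$ for $r\in\{2,3\}$ (there are not enough triples to constrain the coordinates $v_0,\ldots,v_{r-1}$) and $\dim\ker\mu_r=0$ for $r\ge 4$ (differences of the triple equations force $v_0=\cdots=v_{r-1}$, and then any single triple equation forces the common value to vanish). Therefore $\dim\sum_{i=0}^{n+1}L_i^2V = n(n+3)$ when $r\le 3$ and $\dim\sum_{i=0}^{n+1}L_i^2V = (n+1)(n+2)$ when $r\ge 4$, so $A_r \in \bbT_{n+2}(v_3(\bbP V))$ if and only if $r\le 3$. By $\GL(V)$-invariance the same holds for every $A\in K_r$, so among concise configurations the Terracini locus is exactly $K_2\sqcup K_3$; since $K_2\subseteq \overline{K_3}$ by \autoref{lemma: orbits}, its closure is $\overline{v_3(K_3)}$, which is irreducible of dimension $n(n+1)+2$.

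The main obstacle is the $r=3$ vs $r\ge 4$ dichotomy for $\ker\mu_r$: this is the only genuine calculation, and it is what pins down why the Kruskal rank $4$ threshold of \autoref{thm: main theorem} reappears here. An alternative for the $r\le 3$ half is to invoke \autoref{thm: main theorem} together with the observation that positive-dimensional fibers of $\pi_\sigma$ force $d\pi_\sigma$ to drop rank at every preimage point; but the $r\ge 4$ half still seems to require the explicit tangent computation above.
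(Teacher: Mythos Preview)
Your proof is correct and takes a genuinely different route from the paper's. The paper does not compute any tangent spaces directly: instead, for $A\in K_r$ it uses \autoref{noint} and \autoref{noint2} to reduce the dependence of the tangent spaces $T_{[L_j^3]}v_3(\bbP V)$ to the dependence of the corresponding tangent spaces to $v_3(\bbP V')$ at the $r+1$ points $A'=\{[x_0],\ldots,[x_{r-1}],[x_0+\cdots+x_{r-1}]\}\subseteq \bbP V'$, where $\dim V'=r$. Since $A'$ is in LGP in $\bbP V'$, $\GL(V')$-equivariance then forces the whole of $(\bbP V')^{(r+1)}$ into the Terracini locus, so by Terracini's Lemma every fiber of $\pi_\sigma$ over $\bbP S^3V'$ is positive-dimensional; \autoref{thm: main theorem} then says this happens only for $r\le 3$. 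The reverse inclusion is again read off from \autoref{thm: main theorem}.

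Your argument is more elementary and self-contained: the kernel analysis of the multiplication map $\mu_r$ replaces \autoref{noint}, \autoref{noint2}, Terracini's Lemma, and the appeal to \autoref{thm: main theorem} all at once, and it handles both inclusions simultaneously. The paper's route, by contrast, ties the Terracini statement back to the identifiability result, making explicit that the Kruskal-rank-$4$ threshold in \autoref{thm: main theorem}(I) and the Terracini threshold are the same phenomenon. Your remark at the end, that the $r\le 3$ half could alternatively be obtained from \autoref{thm: main theorem} via positive-dimensional fibers, is exactly what the paper does for \emph{both} halves; the point you may not have noticed is that the $r\ge 4$ half also follows from \autoref{thm: main theorem} (uniqueness forces generically finite, hence generically full-rank, $\pi_\sigma$), so no explicit tangent computation is strictly required there either.
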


The proof of \autoref{thm: terracini locus characterization} is based on \autoref{lemma: orbits} and two technical results presented below, \autoref{noint} and \autoref{noint2}.

\begin{lemma}\label{noint}
Let $d \geq 3$. Let $\bbP V_1 ,\bbP V_2 \subseteq \bbP V$ be disjoint linear spaces. For $i = 1,2$, let 
\[
H_i =  \linspan {T_{[M^d]} v_d(\bbP V) : M \in V_i }.
\]
Then $\bbP H_1 \cap \bbP H_2 = \emptyset$.
\end{lemma}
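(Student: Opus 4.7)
The plan is to reduce the claim to a statement about graded pieces of the symmetric algebra, using the standard description of the affine tangent space to the Veronese.

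First, I would identify each $H_i$ explicitly. Since the affine tangent space to $v_d(\bbP V)$ at $[M^d]$ (lifted to $S^d V$) is $M^{d-1} V = \{M^{d-1} N : N \in V\}$, the space $H_i$ is the linear span of all such products with $M\in V_i$, $N\in V$. By polarization, the span of $\{M^{d-1} : M\in V_i\}$ is precisely $S^{d-1} V_i$, so I get the intrinsic identification
\[
 H_i = S^{d-1}V_i \cdot V \subseteq S^d V,
\]
the degree-$d$ part of the ideal in $\Sym(V^*)$ generated by $S^{d-1}V_i$.

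Next, I would exploit the hypothesis that $\bbP V_1$ and $\bbP V_2$ are disjoint, which is exactly the condition $V_1 \cap V_2 = 0$. This allows me to choose a complement $W$ so that $V = V_1 \oplus V_2 \oplus W$, giving the direct sum decomposition
\[
 S^d V = \bigoplus_{a+b+c = d} S^a V_1 \otimes S^b V_2 \otimes S^c W.
\]
Under this decomposition, $H_1 = S^{d-1}V_1 \cdot V$ is precisely the direct sum of those summands with $a \geq d-1$, i.e. with $b+c \leq 1$; symmetrically, $H_2$ is the sum of summands with $a+c \leq 1$.

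Finally, the intersection $H_1 \cap H_2$ is the sum of summands satisfying both inequalities. A direct check: if $c = 0$ then $a,b \leq 1$ forces $a+b+c \leq 2 < d$; if $c = 1$ then $a = b = 0$ forces $a+b+c = 1 < d$; and $c \geq 2$ is excluded outright. Since $d \geq 3$, no triple $(a,b,c)$ with $a+b+c = d$ satisfies both constraints, so $H_1 \cap H_2 = 0$ and therefore $\bbP H_1 \cap \bbP H_2 = \emptyset$. The only mildly subtle step is the identification of $H_i$ with $S^{d-1}V_i \cdot V$ via polarization; everything after that is just bookkeeping on the multigraded decomposition, and the hypothesis $d \geq 3$ is used precisely here and is sharp (for $d=2$ the summand $V_1 \otimes V_2$ lies in both $H_1$ and $H_2$).
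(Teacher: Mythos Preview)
Your proof is correct and follows essentially the same approach as the paper: both identify $H_i = V \cdot S^{d-1}V_i$ via the tangent space description and polarization, and then conclude from $V_1 \cap V_2 = 0$ and $d \geq 3$ that $H_1 \cap H_2 = 0$. The paper's proof is very terse at this last step (it simply asserts the conclusion), whereas you have spelled out the multigraded bookkeeping via a splitting $V = V_1 \oplus V_2 \oplus W$; your observation that $d \geq 3$ is sharp is also correct and worth noting.
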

\begin{proof} We have $T_{[M^d]} v_d(\bbP V) = V \cdot M^{d-1}$. Therefore 
\[
H_i =  \linspan {T_{[M^d]} v_d(\bbP V) : M \in V_i \}} = V \cdot S^{d-1} V_i;
\]
since $V_1 \cap V_2 = 0$ and $d \geq 3$, we deduce $\bbP H_1 \cap \bbP H_2 = 0$.
\end{proof}

The next result is essentially contained in \cite[Lemma 5.10]{ChiCilib}.

\begin{lemma}\label{noint2} Let $V' \subseteq V$ be a proper subspace of $V$ and let $L_1,\dots L_k \in V'$. If the tangent spaces to $v_d(\bbP V)$ at $[L_1^d] \vvirg [L_k^d]$ are linearly dependent, so are the tangent spaces to $v_d(\bbP V')$ to $[L_1^d] \vvirg [L_k^d]$.
\end{lemma}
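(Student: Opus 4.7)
The plan is to translate the statement into the concrete description of the tangent spaces as $T_{[L^d]}v_d(\bbP V) = V \cdot L^{d-1} \subseteq S^d V$, so that linear dependence of the tangent spaces at $[L_1^d]\vvirg [L_k^d]$ amounts to the existence of $N_1\vvirg N_k \in V$, not all satisfying $N_i L_i^{d-1}=0$, such that
\[
\sum_{i=1}^k N_i L_i^{d-1} = 0 \quad \text{in } S^d V.
\]
The goal is to produce $N'_1\vvirg N'_k \in V'$ giving an analogous non-trivial relation in $S^d V'$.

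The main tool will be a choice of a vector-space complement $V = V' \oplus W$ (with $W\neq 0$ since $V'$ is proper) and the induced bigrading $S^d V = \bigoplus_{l=0}^d S^{d-l}V'\cdot S^l W$. Writing $N_i = N'_i + W_i$ with $N'_i \in V'$ and $W_i \in W$, I would substitute into the vanishing relation and note that the $N'_i L_i^{d-1}$ sit in the degree-$(d,0)$ piece $S^d V'$, while the $W_i L_i^{d-1}$ sit in the degree-$(d-1,1)$ piece $S^{d-1}V' \otimes W$. Since these components are disjoint summands, the relation splits into
\[
\sum_i N'_i L_i^{d-1} = 0 \quad \text{in } S^d V', \qquad \sum_i W_i L_i^{d-1} = 0 \quad \text{in } S^{d-1}V'\otimes W.
\]

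If some $N'_i L_i^{d-1}$ is non-zero, the first identity is already the desired dependence among the tangent spaces to $v_d(\bbP V')$ and we are done. Otherwise, since $S^{\bullet} V'$ is a domain and each $L_i\neq 0$, we must have $N'_i = 0$ for all $i$, so that $N_i = W_i \in W$, and some $W_i$ is non-zero. Expanding $W_i = \sum_j c_{ij} w_j$ in a basis $w_1\vvirg w_m$ of $W$, the second identity becomes $\sum_j w_j \otimes \bigl(\sum_i c_{ij} L_i^{d-1}\bigr) = 0$, so for every $j$ one obtains the scalar-coefficient relation $\sum_i c_{ij} L_i^{d-1} = 0$ in $S^{d-1}V'$. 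Picking an index $j_0$ and an $i_0$ with $c_{i_0 j_0}\neq 0$, and multiplying by any non-zero $M\in V'$, the elements $N'_i := c_{i j_0} M \in V'$ satisfy $\sum_i N'_i L_i^{d-1} = 0$ with $N'_{i_0}L_{i_0}^{d-1}\neq 0$, which is precisely a non-trivial dependence among the tangent spaces $V'\cdot L_i^{d-1}$ to $v_d(\bbP V')$.

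The only delicate point is the degenerate case where the initial $V'$-projections vanish identically; the expansion of $W_i$ in a basis of $W$ is the trick that extracts a genuine scalar linear dependence among the $L_i^{d-1}$'s, which can then be promoted to a dependence among the $V'\cdot L_i^{d-1}$'s by multiplication with an arbitrary non-zero element of $V'$. Everything else reduces to the direct-sum decomposition of $S^d V$ with respect to the splitting $V = V'\oplus W$, which is routine.
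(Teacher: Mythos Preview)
Your proof is correct and follows essentially the same approach as the paper: both use a complement $V = V' \oplus W$ and split the relation $\sum N_i L_i^{d-1}=0$ along the resulting bigrading of $S^d V$. The only cosmetic difference is the case split: the paper first disposes of the case where the $L_i^{d-1}$ are linearly dependent and then shows directly that $N_i \in V'$, whereas you split according to whether the $V'$-components $N_i'$ all vanish and, when they do, extract the scalar dependence among the $L_i^{d-1}$ from the $W$-component (which is exactly what the paper's ``clearly satisfied'' case amounts to once made explicit).
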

\begin{proof} If $[L_1^{d-1}] \vvirg [L_k^{d-1}]$ are linearly dependent then the statement is clearly satisfied. Therefore suppose they are independent. If $T_{[L_1^d]} v_d(\bbP V) \vvirg T_{[L_k^d]} v_d(\bbP V)$ are linearly dependent, then there exist $M_1 \vvirg M_k \in V$, not all zero, such that 
\[
 M_1 L_1^{d-1} + \cdots + M_k L_k^{d-1} = 0.
\]
In fact, we show $M_j \in V'$ for every $j$. To see this, let $V''$ be a complement to $V'$, so that $V = V' \oplus V''$; write $M_j = M_j' + M_j''$ for some $M_j' \in V'$, $M_j'' \in V''$. We have 
\begin{align*}
& M_1 L_1^{d-1} + \cdots + M_k L_k^{d-1} = \\ & (M_1' L_1^{d-1} + \cdots + M_k' L_k^{d-1}) + (M_1'' L_1^{d-1} + \cdots + M_k'' L_k^{d-1})  = 0.
\end{align*}
Since $L_j \in V'$, the two summands in the expression above are elements of $S^{d} V'$ and $V'' \cdot S^{d-1} V'$ respectively; since these two subspaces are linearly independent, each summand must vanish. Since $V' \cap V'' = 0$, the elements of $V'' \cdot S^{d-1} V'$ can be regarded as elements of $V'' \otimes S^{d-1} V' \subseteq S^d V$; in particular, we have $M_1'' \otimes  L_1^{d-1} + \cdots + M_k'' \otimes L_k^{d-1} = 0$. If some $M_j''$ is nonzero, we deduce that $L_1^{d-1} \vvirg L_k^{d-1}$ are linearly dependent, which contradicts the initial assumption. Therefore $M_j'' = 0$ for every $j$ and $M_j = M_j'$. 
\end{proof}

We can now provide a complete proof of \autoref{thm: terracini locus characterization}.
\begin{proof}[Proof of \autoref{thm: terracini locus characterization}]
Let $\Omega \subseteq \bbP V^{(n+2)}$ be the subset of sets of points spanning the entire $\bbP V$. By definition of concise Terracini locus, $v_3(\Omega) \cap \bbT^{\con}_{n+2}(v_3(\bbP V))$ is Zariski open in $\bbT^{\con}_{n+2}(v_3(\bbP V))$ and non-empty if $\bbT^{\con}_{n+2}(v_3(\bbP V))$ is non-empty. We prove that $v_3(\Omega) \cap \bbT^{\con}_{n+2}(v_3(\bbP V)) = v_3(K_3)$.

Let $A \in \Omega$ and write $A = \{ [L_1] \vvirg [L_{n+2}]\}$. Let $r$ be the Kruskal rank of $A$: by \autoref{lemma: orbits}, $A \in K_r$ and it can be normalized as $A = \{[x_0] \vvirg [x_n],[x_0 + \cdots + x_{r-1}]\}$. Write $T_j = T_{[x_j]}v_3(\bbP V)$, $T_+ = T_{[x_0 + \cdots + x_{r-1}]} v_3(\bbP V)$; let $V' = \langle x_0 \vvirg x_{r-1}\rangle$ and let $T_j' = T_{[x_j]}v_3(\bbP V')$, $T_+' = T_{[x_0 + \cdots + x_{r-1}]} v_3(\bbP V')$.

If $v_3(A)$ is an element of the Terracini locus, then $T_0 \vvirg T_n , T_+$ are linearly dependent. By \autoref{noint}, the same must hold for the tangent spaces $T_0 \vvirg T_{r-1}, T_+$. Further, by \autoref{noint2}, the same must hold for $T'_0 \vvirg T_{r-1}',T_+'$. In particular, it is enough to $A' \subseteq A$ defined by $A' = \{ [x_0] \vvirg [x_{r-1}], [x_0 + \cdots +x_{r-1}]\} \in (\bbP V')^{(r+2)}$.

We deduce that $v_3(A')$ belongs to the concise Terracini locus $\bbT^{\con}_{r+2}(v_3(\bbP V'))$. By the action of $\GL(V')$, the same holds for every set of $r+2$ points in linear general position in $\bbP V'$. Passing to the closure, we obtain $v_3(\bbP V')^{(r+2)}$ is entirely contained in the Terracini locus. By Terracini's Lemma, we deduce that all fibers of $\pi_\sigma : A\sigma_{r+2}(v_3(\bbP V')) \to \bbP S^3 V'$ have positive dimension. \autoref{thm: main theorem} guarantees that this is the case only if $r \leq 3$. This shows that $A \subseteq K_3$, or equivalently that it has four coplanar points. We obtained $v_3(\Omega) \cap \bbT^{\con}_{n+2}(v_3(\bbP V)) \subseteq v_3(K_3)$. 

The reverse inclusion holds because of \autoref{thm: main theorem} and this concludes the proof of the equality in \eqref{eqn: concise terracini}.

Finally, by \autoref{lemma: orbits}, we conclude that $\bbT^{\con}_{n+2}(v_3(\bbP^n)) = \bar{v_3(K_3)}$ is irreducible, and of dimension $n(n+1) + 2$.
\end{proof}
\bibliographystyle{alphaurl}
\bibliography{bibKruskal.bib}

\end{document}